\def\gjoin{\copy\dplus}
\newcommand{\caze}[2]{\textbf{Case {#1}:} \textit{#2}}
\newtheorem{proposition}{Proposition}[section]
\newtheorem{corollary}[proposition]{Corollary}
\newtheorem{conjecture}[proposition]{Conjecture}
\newtheorem{lemma}[proposition]{Lemma}
\newtheorem{theorem}[proposition]{Theorem}
\newtheorem{observation}[proposition]{Observation}
\theoremstyle{definition}
\newtheorem{example}[proposition]{Example}
\newtheorem{definition}[proposition]{Definition}
\newtheorem{remark}[proposition]{Remark}
\newcommand{\sizeof}[1]{\left\lvert{#1}\right\rvert}
\newcommand{\floor}[1]{\left\lfloor{#1}\right\rfloor}
\newcommand{\ceil}[1]{\left\lceil{#1}\right\rceil}
\newcommand{\chisc}{\chi_{\mathrm{SC}}}
\newcommand{\chisp}{\chi_{\mathrm{SP}}}
\newcommand{\csg}[2]{\overline{K}_{#1} \gjoin K_{#2}}
\def\C#1{\left|{#1}\right|}
\def\chil{\chi_\ell}
\def\spo{\operatorname{\mathaccent"017{s}}}
\def\st{\colon\,}
\def\esub{\subseteq}
\def\nul{\varnothing}
\def\FR{\frac}
\def\FL{\floor}
\def\cost{sum-color cost}
\def\NN{{\mathbb N}}
\def\GG{{\mathbb G}}
\def\Kb{\overline{K}}
\def\Gb{\overline{G}}
\def\Hb{\overline{H}}
\def\CH{\binom}
\def\SE#1#2#3{\sum_{#1=#2}^{#3}}
\def\cart{{\scriptstyle\Box}}
\def\ntalph{\FR{n^2}{2\alpha(G)}+\FR n2}
\def\cS{{\mathcal S}}
\def\hj{{J}}
\author{Thomas Mahoney\thanks{Emporia State University, Emporia, KS,
tmahoney@emporia.edu}\,,
Gregory J.~Puleo\thanks{Auburn University, Auburn, AL, gjp0007@auburn.edu.
Previously at Coordinated Science Laboratory,
University of Illinois, Urbana, IL.
Research supported by the IC Postdoctoral Fellowship.}\,,
Douglas B.~West\thanks{Zhejiang Normal University, Jinhua, China, and
University of Illinois, Urbana, IL, west@math.uiuc.edu.
Research supported by Recruitment Program of Foreign Experts,
1000 Talent Plan, State Administration of Foreign Experts Affairs, China.
}}
\title{Online Paintability: The Slow-Coloring Game}
\begin{document}
\maketitle

\vspace{-2pc}
\begin{abstract}
The {\it slow-coloring game} is played by Lister and Painter on a graph $G$.
On each round, Lister marks a nonempty subset $M$ of the uncolored vertices,
scoring $\C M$ points.  Painter then gives a color to a subset of $M$ that is
independent in $G$.  The game ends when all vertices are colored.  Painter and
Lister want to minimize and maximize the total score, respectively.  The best
score that each player can guarantee is the \emph{\cost} of $G$, written
$\spo(G)$.  The game is an online variant of online sum list coloring.

We prove $\FR{\C{V(G)}}{2\alpha(G)}+\FR 12\le\FR{\spo(G)}{\C{V(G)}}\leq 
\max\left\{\frac{\C{V(H)}}{\alpha(H)}\st\!H \subset G\right\}$, where
$\alpha(G)$ is the independence number, and we study when equality holds in
the bounds.  We compute $\spo(G)$ for graphs with
$\alpha(G)=2$.  Among $n$-vertex trees, we prove that $\spo$ is minimized by
the star and maximized by the path.  We also obtain good bounds on
$\spo(K_{r,s})$.
\end{abstract}

%
%

\baselineskip 16pt
\section{Introduction}\label{sec:intro}
A {\it proper coloring} of a graph $G$ assigns each vertex in the vertex
set $V(G)$ a color so that adjacent vertices have distinct colors.  That is,
the set of vertices assigned a given color must be an {\it independent set},
meaning a set of pairwise nonadjacent vertices.  The chromatic number, written
$\chi(G)$, is the least $k$ such that $G$ has a proper coloring using $k$
colors.

To examine worst-case behavior of proper coloring when not all colors are
available at all vertices, we study a coloring game played by Lister and
Painter on a graph $G$.  In the $i$th round, Lister marks a nonempty subset $M$
of the uncolored vertices as eligible to receive color $i$, scoring $\C M$.
Painter then gives color $i$ to a subset of $M$ that is independent in $G$.
The game ends when all vertices are colored, producing a proper coloring.
Painter's goal is to minimize the total score; Lister seeks to maximize it,
thereby introducing a large total delay in the coloring process.  We call this
the {\it slow-coloring game}.  The score that each player can guarantee doing
no worse than is the \emph{\cost} of $G$, written $\spo(G)$.

The slow-coloring game arose as an online variant of online sum list coloring.
List coloring generalizes the classical model of graph coloring by introducing
a \textit{list assignment} $L$ that assigns to each vertex $v$ a set $L(v)$ of
available colors.  A graph $G$ is \emph{$L$-colorable} if it has a proper
coloring $\phi$ such that $\phi(v) \in L(v)$ for every vertex $v$.  Given
$f\st V(G)\to\NN$, a graph $G$ is \emph{$f$-choosable} if $G$ is $L$-colorable
whenever $\C{L(v)}\ge f(v)$ for every vertex $v$.

Introduced by Vizing~\cite{V2} and by Erd\H{o}s, Rubin, and Taylor~\cite{ERT},
the \textit{choice number} or \textit{choosability} $\chil(G)$ is the least
$k$ such that $G$ is $f$-choosable whenever $f(v)\ge k$ for all $v\in V(G)$.
Since one option for the lists is to be identical, always $\chil(G)\ge\chi(G)$.
There are already hundreds of papers on aspects of choosability.

Instead of minimizing the threshold list size, we may seek the least sum (or
average) of list sizes.  Introduced by Isaak~\cite{I1,I2} and studied
in~\cite{BBBD,H1,H2}, the \textit{sum-choosability} of a graph $G$, denoted
$\chisc(G)$, is the minimum of $\sum f(v)$ over all $f$ such that $G$ is
$f$-choosable.

Using integer colors, a list assignment can be viewed as a schedule, presenting
$\{v\st i\in L(v)\}$ on round $i$ as the set $M_i$ of candidates to receive
color $i$.  In slow-coloring, Lister can model any list assignment, but Lister
has other options and can change strategy in response to Painter's moves.
Such flexibility for Lister leads to the {\it $f$-painting game}, introduced
independently by Schauz~\cite{S1} and by Zhu~\cite{Z1}.  As as in the
slow-coloring game, Lister marks a set $M$ and Painter colors an independent
subset of $M$.  Instead of fixed lists, we have a size $f(v)$ for each vertex
$v$ as the number of times Painter can allow $v$ to be marked.  Lister wins the
game by marking some vertex $v$ more than $f(v)$ times; Painter wins by
coloring all the vertices before that happens.  The graph is
\textit{$f$-paintable} if Painter has a winning strategy.

The \textit{paint number} or \textit{paintability} is the least $k$ such that
$G$ is $f$-paintable whenever $f(v)\ge k$ for all $v\in V(G)$.  The \emph{sum
paintability} of a graph $G$, introduced by Carraher, Mahoney, Puleo, and
West~\cite{CMPW} and written $\chisp(G)$, is the minimum of $\sum f(v)$ over
all $f$ such that $G$ is $f$-paintable.  Finding $f$ so that $G$ is
$f$-paintable corresponds to giving supplies to each vertex; $f(v)$ tokens are
allocated to $v$, and one is used each time $v$ is marked.  In
sum-paintability, Painter seeks to minimize the total number of tokens
allocated.

The slow-coloring game differs from sum-paintability in that Painter need not
allocate tokens in advance.  Painter allocates tokens to vertices in response
to the marked set.  Since Painter can choose independent sets as dictated by an
optimal strategy for sum-paintability, always $\spo(G) \leq \chisp(G)$.  Note
also that $\chisc(G)\le\chisp(G)$, since Painter can win the $f$-painting game
in which Lister's moves are specified by $L$ if and only if $G$ is
$L$-colorable (lists are truncated when vertices are colored).  The inequality
$\spo(G) \geq \chisc(G)$ seems natural, but we will see in
Example~\ref{example} that it is not always true.

Unlike sum-paintability, \cost\ is given by
an easily described (but hard to compute) recursive formula.  The key point is
that prior choices do not affect Painter's optimal strategy for coloring
subsets of marked sets on the uncolored subgraph.  Thus we can view colored
vertices as having been ``deleted'' from the graph.
\begin{proposition}\label{pr:recur}
$\qquad\spo(G)=\displaystyle{\max_{\nul\ne M\esub V(G)}}
\left(\C{M} + \min\,\spo(G-I)\right).$
\end{proposition}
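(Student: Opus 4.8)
The plan is to prove the two inequalities $\spo(G)\le R$ and $R\le\spo(G)$, where $R$ denotes the right‑hand side and the minimum in $R$ is taken over independent sets $I\sset M$, by exhibiting a Painter strategy and a Lister strategy respectively. Two preliminary facts make this work. First, $\spo(G)$ is finite: since $M$ is always nonempty, Painter can delete a single vertex of $M$ on each round, forcing the game to end within $\C{V(G)}$ rounds and hence with total score at most $\C{V(G)}^2$. Second --- the point stressed just before the statement --- the slow‑coloring game carries no state between rounds (Painter holds no ``budget''), so once some vertices have been deleted the position is simply the graph $G-I$ obtained by removing them, and the remainder of the game is precisely the slow‑coloring game on $G-I$, of value $\spo(G-I)$, no matter what was marked earlier.

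For $\spo(G)\le R$ I describe a Painter strategy. On the first round Lister marks some nonempty $M$, and Painter deletes an independent set $I^*\sset M$ minimizing $\spo(G-I^*)$; we may take $I^*$ nonempty, since deleting $\nul$ is never better than deleting one vertex of $M$ (an extra undeleted vertex can only help Lister, so $\spo(G)\ge\spo(G-v)$). For the rest of the game Painter plays an optimal strategy on $G-I^*$, holding the residual score to at most $\spo(G-I^*)$. The total is then at most $\C M+\spo(G-I^*)=\C M+\min_{I}\spo(G-I)\le R$.

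For $R\le\spo(G)$ I describe a Lister strategy. Lister marks on the first round a set $M$ attaining the maximum in $R$, scoring $\C M$. Whatever independent set $I\sset M$ Painter deletes, $\spo(G-I)\ge\min_{I'}\spo(G-I')$; Lister then plays an optimal strategy on $G-I$. (If Painter deletes $\nul$, Lister simply repeats the move $M$; this either forces a nonempty deletion eventually or else lets the score grow without bound, past the finite value $R$.) Thus Lister guarantees a total of at least $\C M+\spo(G-I)\ge\C M+\min_{I'}\spo(G-I')=R$.

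The arguments are short once set up, and I do not expect a genuine obstacle: this is essentially the statement that the game value satisfies its own one‑step recursion. The place that needs the most care is the bookkeeping around the empty deletion $I=\nul$, which is what could otherwise make the recursion (and the phrase ``plays optimally on $G-I$'') ill‑founded. The clean way to dispatch it is to note once and for all that $I=\nul$ is a dominated move for Painter whenever $M\ne\nul$, so the minimum in the statement, and the game itself, may be restricted to nonempty deletions without changing any value; with $\nul$ excluded, both directions above go through verbatim.
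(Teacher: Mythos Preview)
Your proof is correct and takes the same approach as the paper: both rest on the observation that once Painter deletes $I$, the remaining position is exactly the slow-coloring game on $G-I$, so the value satisfies the one-step min--max recursion. The paper's proof is a single sentence to this effect; you have spelled out both inequalities via explicit strategies and handled the degenerate case $I=\nul$, which the paper leaves implicit.
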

\begin{proof}
In response to the initial marked set $M$, Painter minimizes the additional
score over colored subsets $I\esub M$ such that $I$ is independent in $G$.
Lister chooses $M$ to maximize the resulting total score.
\end{proof}

In studying optimal strategies for Lister and Painter, simple observations
reduce the set of moves that need to be considered.

\begin{observation}\label{simple}
On any graph, there are optimal strategies for Lister and Painter such that
Lister always marks a set $M$ inducing a connected subgraph, and Painter
always colors a maximal independent subset of $M$.
\end{observation}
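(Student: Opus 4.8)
The plan is to extract both claims from the recursion in Proposition~\ref{pr:recur}. The one auxiliary fact needed is that $\spo$ is \emph{monotone} under vertex deletion: $\spo(G-v)\le\spo(G)$ for every vertex $v$, so that $\spo(H)\le\spo(G)$ whenever $H$ is an induced subgraph of $G$. I would prove this by induction on $\C{V(G)}$, applying Proposition~\ref{pr:recur} to $\spo(G-v)$: choose $M$ and an independent set $I\sset M$ realizing the outer maximum and inner minimum. Since $\C M\ge1$, the minimum is not realized by $I=\nul$ (that would force $\spo(G-v)=\C M+\spo(G-v)$), so $I\ne\nul$; and $M$ together with $I$ is also a legal round in $G$, with $(G-v)-I=(G-I)-v$. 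As $G-I$ has fewer vertices than $G$, the inductive hypothesis gives $\spo((G-I)-v)\le\spo(G-I)$, so $\spo(G-v)=\C M+\spo((G-v)-I)\le\C M+\spo(G-I)\le\spo(G)$.

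For Painter: suppose Lister marks a set $M$ and Painter is about to color an independent $I\sset M$ that is not maximal in $M$. Extend $I$ to a maximal independent subset $I'$ of $M$; then $G-I'$ is an induced subgraph of $G-I$, so monotonicity gives $\spo(G-I')\le\spo(G-I)$. Hence the inner minimum in Proposition~\ref{pr:recur} is always attained at some maximal independent subset of $M$, and the strategy ``color a maximal independent subset of $M$ minimizing the \cost\ of the remaining graph'' realizes the recursion on every round; a routine induction on the number of undeleted vertices then shows it holds Lister's total score to at most $\spo(G)$, so it is optimal.

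For Lister: let $\spo'(G)$ be the value of the recursion in Proposition~\ref{pr:recur} when the outer maximum is taken only over nonempty sets $M$ with $G[M]$ connected. Trivially $\spo'(G)\le\spo(G)$; I claim equality, by induction on $\C{V(G)}$. Let $M$ be any nonempty subset of $V(G)$ and let $M_1,\dots,M_k$ be the vertex sets of the components of $G[M]$. Each $G[M_i]$ is connected, and remains connected after deleting an independent subset of any union of the other $M_j$'s, since no edge of $G$ joins distinct components of $G[M]$. Applying the $\spo'$-recursion successively to $M_1,M_2,\dots,M_k$, and using that a set $I\sset M$ is independent in $G$ exactly when it is a union of independent subsets of the individual $M_i$ (and that this union is size-additive), we obtain $\spo'(G)\ge\C M+\min_I\spo'(G-I)$, the minimum over independent $I\sset M$. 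Because $\C M\ge1$, this minimum is attained at a nonempty $I$, for which the inductive hypothesis gives $\spo'(G-I)=\spo(G-I)$; hence $\spo'(G)\ge\C M+\min_I\spo(G-I)$. Taking the maximum over all nonempty $M$ gives $\spo'(G)\ge\spo(G)$, so Lister loses nothing by restricting to connected marks.

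None of this is deep. The steps requiring care are the bookkeeping in the Lister argument --- checking that deleting independent pieces of other components preserves connectivity of each $G[M_i]$, and that the correspondence between independent subsets of $M$ and tuples of independent subsets of the $M_i$ is a size-preserving bijection --- and, in both inductions, disposing of the degenerate response $I=\nul$ (excluded because $\C M\ge1$) so that each round strictly decreases the vertex count and the recursion is well-founded.
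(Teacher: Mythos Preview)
Your approach is essentially the paper's, made formal: for Lister you split a disconnected mark into successive component-marks (the paper says exactly this in one sentence), and for Painter you argue that deleting extra vertices cannot raise the remaining cost (the paper's ``coloring extra vertices at no extra cost cannot hurt Painter'').  The Lister argument via $\spo'$ is correct.

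There is, however, a gap in your inductive proof of monotonicity.  The final inequality in
\[
\spo(G-v)=\C M+\spo((G-v)-I)\le\C M+\spo(G-I)\le\spo(G)
\]
does not follow.  Your $I$ was chosen to minimize $\spo((G-v)-\,\cdot\,)$, not $\spo(G-\,\cdot\,)$, and when Painter's response is suboptimal for $G$ the quantity $\C M+\spo(G-I)$ can exceed $\spo(G)$.  Concretely, let $G-v=K_{1,3}$ with center $b$ and leaves $a,c,d$; an optimal first round is $M=\{a,b,c\}$ with $I=\{a,c\}$ (here both $\{a,c\}$ and $\{b\}$ attain the minimum, and $\spo(K_{1,3})=6$).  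Now let $v$ be adjacent to $b$ and $d$.  Then $G-I$ is the triangle on $\{b,d,v\}$, so $\C M+\spo(G-I)=3+6=9$, while a short computation gives $\spo(G)=8$.

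The fix is easy: apply the inductive hypothesis inside the minimum rather than to a single $I$.  From Proposition~\ref{pr:recur}, $\spo(G)\ge\C M+\min_{I'}\spo(G-I')$; since each nonempty independent $I'\subseteq M$ lies in $V(G-v)$, induction gives $\spo(G-I')\ge\spo((G-v)-I')$ termwise, so $\min_{I'}\spo(G-I')\ge\min_{I'}\spo((G-v)-I')=\spo(G-v)-\C M$.  (Equivalently, as in Observation~\ref{disjoint}, Lister can simply play an optimal $G-v$ strategy inside $G$.)  With monotonicity established this way, the remainder of your argument goes through.
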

\begin{proof}
A move in which Lister marks a disconnected set $M$ can be replaced with
successive moves marking the vertex sets of the components of the subgraph
induced by $M$.  Also, coloring extra vertices at no extra cost cannot hurt
Painter.
\end{proof}

Another easy observation sometimes yields a useful lower bound.

\begin{observation}\label{disjoint}
If $G_1$ and $G_2$ are disjoint subgraphs of $G$, then 
$\spo(G)\ge\spo(G_1)+\spo(G_2)$.
\end{observation}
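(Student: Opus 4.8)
The plan is to have Lister simulate optimal strategies for the two smaller games one after the other. Fix Lister strategies $\cS_1$ for the slow-coloring game on $G_1$ and $\cS_2$ for the slow-coloring game on $G_2$ that guarantee scores at least $\spo(G_1)$ and $\spo(G_2)$, respectively, against any Painter. Playing on $G$, Lister first follows $\cS_1$, in each round marking a nonempty subset of the still-uncolored vertices of $G_1$ and ignoring the rest of $G$, until all of $V(G_1)$ has been colored; Lister then follows $\cS_2$ on $V(G_2)$ until the game on $G$ ends (marking arbitrarily afterward if vertices outside $V(G_1)\cup V(G_2)$ remain, which only adds to the score).

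The point that makes this work is that each of Painter's responses in the game on $G$ is a legal response in the relevant smaller game. In the first phase, when Lister marks $M\esub V(G_1)$, Painter colors some $I\esub M$ independent in $G$; since $E(G_1)\esub E(G)$ and $I\esub V(G_1)$, the set $I$ is independent in $G_1$ as well, so this is a legal move against $\cS_1$. Hence the first phase proceeds exactly as a play of the game on $G_1$, Lister scores at least $\spo(G_1)$ during it, and it terminates with all of $V(G_1)$ colored. Because every marked set in the first phase lies in $V(G_1)$, no vertex of $G_2$ is colored there, so all of $V(G_2)$ survives into the second phase; by the symmetric argument Lister scores at least $\spo(G_2)$ there. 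Summing the two contributions yields $\spo(G)\ge\spo(G_1)+\spo(G_2)$.

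There is essentially no obstacle; the one thing to get right is the direction of the independence implication—an independent set of $G$ contained in $V(G_i)$ is independent in $G_i$, though not conversely—which is precisely why Lister's simulation is faithful. The same argument, iterated, gives $\spo(G)\ge\sum_i\spo(G_i)$ for any pairwise disjoint subgraphs $G_1,\dots,G_k$ of $G$, and in particular applies when the $G_i$ are the components of $G$.
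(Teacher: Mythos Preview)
Your proof is correct and follows exactly the approach the paper takes: Lister plays an optimal strategy on $G_1$ while ignoring the rest, then does the same on $G_2$. You have simply supplied the details that the paper's one-line proof leaves implicit, in particular the observation that Painter's response (independent in $G$) is automatically a legal response in the subgame on $G_i$ since $E(G_i)\esub E(G)$, and that no vertex of $G_2$ can be colored during the first phase.
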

\begin{proof}
Lister can play an optimal strategy on $G_1$ while ignoring the rest and then
do the same on $G_2$, achieving the score $\spo(G_1)+\spo(G_2)$.
\end{proof}

\begin{observation}\label{monoton}
If $G$ is a subgraph of $H$, then $\spo(G)\le \spo(H)$.
\end{observation}
\begin{proof}
On $G$, Painter can play an optimal strategy for the supergraph $H$.
\end{proof}

The average cost per vertex is $\FR{\spo(G)}{\C{V(G)}}$.
In Section~\ref{sec:general}, we prove easy general bounds.
Let $\alpha(G)$ denote the maximum size of an independent set in a graph $G$.

\begin{theorem}\label{sharpbds}
The following are sharp bounds on $\spo(G)$:
\[
\FR{\C{V(G)}}{2\alpha(G)}+\FR 12\le\FR{\spo(G)}{\C{V(G)}}\leq 
\max\left\{\frac{\C{V(H)}}{\alpha(H)} \st H \esub G\right\}.
\]
Equality holds in the upper bound if and only if $G$ has no edges.
Among complete multipartite graphs that are regular, equality holds in the
lower bound if and only if $\chi(G)=1$ or $\alpha(G)\le 2$.
\end{theorem}

Although the upper bound holds with equality only for edgeless graphs, it may
be asymptotically sharp for the complete bipartite graph $K_{r,r}$ and other
graphs; see Section~\ref{sec:bip}.

Let $\rho(G)= \max\left\{\frac{\C{V(H)}}{\alpha(H)}\st H\esub G\right\}$.
Trivially, $\rho(G)\le \chi(G)$, so $\FR{\spo(G)}{\C{V(G)}}\le\chi(G)$.
The quantity $\rho(G)$ has been called the \emph{Hall ratio} of $G$,
defined in \cite{HR2} and explored further in \cite{HR1,HR3,HR4,HR5} (in fact,
also $\rho(G)\le\chi^*(G)$, where $\chi^*(G)$ is the fractional chromatic
number).  By well-known results on $\alpha(G)$ and $\chi(G)$ for random graphs
(see Section~\ref{sec:general}), Theorem~\ref{sharpbds} implies that with high
probability $\FR{\spo(G)}{\C{V(G)}}$ is within a constant multiple of $\chi(G)$.

In Section~\ref{sec:alpha2}, we compute $\spo(G)$ for all $G$ with
$\alpha(G)=2$, and this aids in studying sharpness of the lower bound in
Theorem~\ref{sharpbds}.  A {\it matching} is a set of pairwise disjoint edges.

\begin{theorem}\label{thm:alpha}
If $\alpha(G)=2$, then $\spo(G)={n-q+1\choose 2}+{q+1\choose 2}$, where $q$ is
the maximum size of a matching in the complement of $G$.
\end{theorem}

We have noted that Painter can follow a winning strategy in an $f$-painting
game to achieve $\spo(G)\le\chisp(G)$.  In Section~\ref{sec:strict}, we
characterize equality.

\begin{theorem}\label{chisp}
$\spo(G)=\chisp(G)$ if and only if every component of $G$ is complete.
\end{theorem}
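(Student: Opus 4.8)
The plan is to prove both directions. For the easy direction, suppose every component of $G$ is complete, say the components are $K_{n_1},\dots,K_{n_k}$. By Observation~\ref{disjoint} and the standard fact that $\chisp$ is additive over components, it suffices to show $\spo(K_n)=\chisp(K_n)$. It is well known (and easy) that $\chisp(K_n)=\binom{n+1}{2}$: Painter allocates $f(v_i)=i$ to the vertices, and Lister does no better than marking all remaining vertices each round. For the $\spo$ side, $\alpha(K_n)=1$, so Painter colors exactly one vertex per round; Lister's optimal play is to mark everything remaining, yielding $n+(n-1)+\cdots+1=\binom{n+1}{2}$. (Alternatively this follows from Theorem~\ref{thm:alpha2} with $\Gb$ edgeless, $q=0$, giving $\binom{n+1}{2}$.) Hence equality holds.

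For the hard direction — which I expect to be the main obstacle — I must show that if some component of $G$ is not complete, then $\spo(G)<\chisp(G)$. First I would reduce to the connected case: if $\spo(G)<\chisp(G)$ fails, pick a component $H$ that is not complete; since both parameters add over components and $\spo\le\chisp$ always, strictness for $H$ gives strictness for $G$. So assume $G$ is connected and not complete. Then $G$ contains an induced path $P_3$ on some vertices $x,y,z$ with $xy,yz\in E(G)$ and $xz\notin E(G)$. The goal is to exhibit an $f$ with $\sum_v f(v) = \chisp(G)$, or rather to show that Painter in the slow-coloring game can do strictly better than any $f$-paintable allocation — equivalently, to show $\spo(G)\le \chisp(G)-1$.

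The cleanest route: take an optimal $f$-paintable function $f$ for $G$ with $\sum_v f(v)=\chisp(G)$, and show Painter wins the slow-coloring game while guaranteeing the total score is at most $\sum_v f(v) - 1$. The idea is that an $f$-paintable allocation is ``wasteful'' precisely because online list coloring forces Painter to reserve a token against each possible marking, whereas in slow-coloring Lister pays for each mark. Concretely, I would have Painter follow a winning $f$-painting strategy, but observe that because of the induced $P_3$, there is a round in which Painter can color an independent set of size $\ge 2$ that the $f$-painting strategy would not have fully colored, or a vertex that is never marked to its full budget; either way one token goes unused, so the realized score is at most $\sum f(v)-1$. Making this rigorous is the crux: I expect one needs a short case analysis (or an inductive argument on $|V(G)|$, peeling off a simplicial-type vertex or a vertex of the $P_3$) to locate the slack. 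A likely clean formulation: induct on $|V(G)|$; if $G$ is disconnected or has a component that is complete we reduce; otherwise use the $P_3$ to find a first move for Painter (against Lister's move $M$) such that $|M| + \spo(G-I)$ beats $|M| + $ (sum of $f$ over $G-I$, reusing $f$) by at least $1$, invoking the inductive hypothesis on the strictly smaller non-complete part. The main obstacle is bookkeeping the token budget after deletions so that the strict inequality propagates; I would handle this by choosing $f$ minimal and using that $f(v)\ge 2$ for every non-simplicial $v$, which gives the needed room.
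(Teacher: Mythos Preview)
Your easy direction is fine and matches the paper. The hard direction, however, is only a sketch, and the concrete pieces you do commit to don't work.

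First, the induction you propose cannot close. You want to delete an independent set $I$ from Lister's first marked set $M$ and invoke the hypothesis on ``the strictly smaller non-complete part'' of $G-I$. But $G-I$ need not have any non-complete component: already for $G=P_3$ with center $y$, Painter's only sensible responses leave either $2K_1$ or $K_2$, both unions of cliques. So there is no smaller instance to which your strict inequality applies, and the base case is exactly the case you are trying to prove. Second, the claim ``$f(v)\ge 2$ for every non-simplicial $v$'' is not true for an arbitrary optimal $f$: for $P_3$ the assignment $(2,1,2)$ is optimal with $f$ equal to $1$ at the non-simplicial center. You would need to prove that \emph{some} optimal $f$ has this property, and you have not; even granting it, you have not said how it produces the one unit of slack.

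The missing idea is an intermediate invariant that survives vertex deletion. The paper introduces the notion of \emph{sp-greedy} (meaning $\chisp(G)=\C{V(G)}+\C{E(G)}$) and splits the argument in two. Step one: if $\spo(G)=\chisp(G)$, then in the coupled game every vertex Painter colors had exactly one token left (your ``no unused token'' intuition, made precise); this yields a vertex $v$ with $f(v)=1$ in Lister's first move, and Proposition~\ref{prop:simple} plus a short lemma shows that ``not sp-greedy'' passes from $G$ to $G-v$, so a minimal-counterexample induction now closes and gives $\spo(G)=\chisp(G)\Rightarrow G$ is sp-greedy. Step two, done separately and directly: if $G$ is connected and not complete, use the induced $P_3$ to give Painter a first move showing $\spo(G)\le\C{V(G)}+\C{E(G)}-1$. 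Your $P_3$ instinct belongs in step two; what you are missing is step one and the sp-greedy bridge that makes the induction go through.
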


The two upper bounds $\spo(G)\le\C{V(G)}\rho(G)$ and $\spo(G)\le\chisp(G)$ are
independent.

\begin{example}\label{example}
Note that $\spo(K_r)=\CH{r+1}2$; Painter can only color one vertex in each
round, so on this graph it is optimal for Lister to always mark all uncolored
vertices.

When $G$ is the disjoint union of the complete graph $K_r$ and an independent
set of $n-r$ vertices, we have $\rho(G)=r$, but $\spo(G)=\chisp(G)= n+\CH r2$,
so $\C{V(G)}\rho(G)$ is much bigger.

When $G$ is bipartite with at least one edge, $\rho(G)=2$.  When
$G=K_{2,r}$, we have $\C{V(G)}\rho(G)=2r+4$, but
$\chisp(G)\approx 2r+2\sqrt r$ (see~\cite{CMPW}), so $\chisp(G)$ is
bigger.  More generally, F\"uredi and Kantor~\cite{FK} proved for
$a\ge3$ and $r>50a^2\log a$ that
$\chisc(K_{a,r})\ge 2r+.068a\sqrt{r\log a}$, so $\chisp(K_{a,r})$ is
even larger, while $\C{V(K_{a,r})}\rho(K_{a,r})$ is only $2r+2a$.
Hence in particular also $\chisc(K_{a,r})>\spo(K_{a,r})$ when
$r> 50a^2\log a$ and $a$ is sufficiently large, by Theorem~\ref{sharpbds}.

On these complete bipartite examples, $\chisp$ is larger than
$\C{V(G)}\rho(G)$, but the two bounds are asymptotically equal.  Another
bipartite example gives asymptotic ratio $5/4$.  Let $G=P_k\cart K_2$, where
$\cart$ denotes the cartesian product ($G$ is the $2$-by-$k$ ``grid'').  Note
that $G$ can be constructed from $K_2$ by successively adding ears of length
$3$.  A lemma in \cite{CMPW} shows that each such addition increases the
sum-paintability by $5$, so $\chisp(G)=5k-2$.  However, $\C{V(G)}\rho(G)=4k$.
\end{example}

In Section~\ref{sec:treeupper}, we prove sharp bounds
on the \cost\ of $n$-vertex trees.

\begin{theorem}\label{treethm}
Among $n$-vertex trees, the value of $\spo$ is minimized
by the star and maximized by the path.  Furthermore, with
$u_t=\floor{(-1+\sqrt{8t+1})/2}$ and $T$ being an $n$-vertex tree,
\[
n+\sqrt{2n}~\approx~ n+u_{n-1}~ = ~ \spo(K_{1,n-1})
~\le~\spo(T)~\leq~\spo(P_n) ~=~ \floor{{3n}/2}.
\]
\end{theorem}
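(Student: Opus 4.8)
The plan is to treat the two exact values directly via the recursion of Proposition~\ref{pr:recur} and then settle the two extremal statements by induction on trees, with the exact values supplying the base behaviour.

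\textbf{The path.} For the lower bound, $P_n$ contains $\floor{n/2}$ pairwise disjoint copies of $K_2$ together with $n-2\floor{n/2}$ further isolated vertices, and $\spo(K_2)=3$ and $\spo(K_1)=1$ follow in one line from Proposition~\ref{pr:recur}, so Observation~\ref{disjoint} gives $\spo(P_n)\ge 3\floor{n/2}+\bigl(n-2\floor{n/2}\bigr)=\floor{3n/2}$. For the upper bound I would give Painter the potential $\Phi=\sum_i\floor{3p_i/2}$, where the $p_i$ are the sizes of the path components of the current graph (isolated vertices counting with $p_i=1$), and show that against any marked set (by Observation~\ref{simple} we may assume it is connected, hence a subpath $v_i\dots v_j$ of some component $v_1\dots v_p$) Painter can pick a maximal independent $I\subseteq M$ with $\C M+\Phi_{\mathrm{new}}\le\Phi_{\mathrm{old}}$; since $\floor p+\floor q\le\floor{p+q}$ and $\Phi=0$ on the empty graph, telescoping yields $\spo(P_n)\le\floor{3n/2}$. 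When $\C M$ is odd Painter deletes every second vertex of $M$ beginning and ending at the ends of $M$, leaving a path on each side and a run of $(\C M-1)/2$ isolated vertices; when $\C M$ is even Painter again deletes every second vertex of $M$ but chooses which end of $M$ to place in $I$ so as to shunt the parity defect of $\floor{3\cdot/2}$ onto the cheaper flanking path. Each resulting configuration (including the boundary cases where $M$ meets an end of its component, and $M$ a single vertex) reduces to a one-line inequality for $\floor{3\cdot/2}$, using that $\floor{3(m+1)/2}-\floor{3m/2}$ is $1$ when $m$ is even and $2$ when $m$ is odd.

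\textbf{The star.} Write $K_{1,n-1}$ with center $c$. By Observation~\ref{simple} we may assume Lister marks $\set{c}\cup S$ for a set $S$ of leaves (a lone marked leaf is never better for Lister), and that once $c$ has been colored the remaining $\ell$ leaves cost exactly $\ell$. Let $g(\ell)$ be the value of the remaining game while $c$ survives with $\ell$ leaves left; Proposition~\ref{pr:recur} gives $g(\ell)=\max_{0\le a\le\ell}\bigl((a+1)+\min\set{\ell,\,g(\ell-a)}\bigr)$ with $a=\C S$, the two inner terms corresponding to Painter coloring $c$ or coloring all of $S$. I claim $g(\ell)=\ell+1+u_\ell$, by induction on $\ell$: substituting the inductive hypothesis turns the claim into the identity $\max_{1\le a\le\ell}\min\set{a,\,1+u_{\ell-a}}=u_\ell$, which follows from $u_\ell$ being the largest $k$ with $\binom{k+1}{2}\le\ell$ (take $a=u_\ell$ for ``$\ge$''; for ``$\le$'' treat $a\le u_\ell$ and $a\ge u_\ell+1$ separately, the latter using $\ell<\binom{u_\ell+2}{2}$). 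Hence $\spo(K_{1,n-1})=g(n-1)=n+u_{n-1}$, and $n+u_{n-1}\approx n+\sqrt{2n}$ because $u_{n-1}(u_{n-1}+1)\le 2(n-1)<(u_{n-1}+1)(u_{n-1}+2)$.

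\textbf{The path is the maximum among trees.} I would induct on $n$ to prove $\spo(T)\le\floor{3n/2}$ for every $n$-vertex tree $T$ (with $T=P_n$ giving equality by the computation above). Take a longest path in $T$ and let $v$ be its second vertex; then every neighbor of $v$ except the third vertex of the path is a leaf, so $v$ and its leaf-neighbors form a set $B$ with $\C B=k+1$ for some $k\ge1$, and $T-B$ is a tree on $n-k-1$ vertices. Painter plays an optimal strategy for $T-B$ while treating $B$ as a star with center $v$ by a strategy akin to the star analysis; the extra cost attributable to $B$ is at most $\floor{3(k+1)/2}$, so the inductive bound on $T-B$ and superadditivity of $\floor{3\cdot/2}$ give $\spo(T)\le\floor{3(n-k-1)/2}+\floor{3(k+1)/2}\le\floor{3n/2}$. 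The delicate point is arbitrating Painter's two strategies at the shared vertex $v$ — whether to color $v$ as the $T-B$ game dictates or as the star game on $B$ dictates — and checking the star-side budget of $\floor{3(k+1)/2}$ is never exceeded regardless of what the $T-B$ game forces.

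\textbf{The star is the minimum among trees.} I would prove $\spo(T)\ge n+u_{n-1}$ for every $n$-vertex tree $T$ by splitting on the matching number $\nu(T)$. If $\nu(T)\ge u_{n-1}$, then $T$ contains $\nu(T)$ disjoint copies of $K_2$ and $n-2\nu(T)$ further isolated vertices, so Observation~\ref{disjoint} gives $\spo(T)\ge 3\nu(T)+\bigl(n-2\nu(T)\bigr)=n+\nu(T)\ge n+u_{n-1}$. If instead $\nu(T)<u_{n-1}$, then by K\H{o}nig's theorem $T$ has a vertex cover $C$ with $\C C=\nu(T)$, so $V(T)\setminus C$ is independent of size $n-\nu(T)$ and $T$ is ``star-like'' around the small set $C$; Lister then mimics the single-star strategy, repeatedly marking all of $C$ together with a fresh batch of vertices of $V(T)\setminus C$ and forcing re-marks as in the star analysis. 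I expect this last case to be the main obstacle: making Lister's bookkeeping on a near-star tree reach the bound $n+u_{n-1}$ requires carefully tracking which vertices outside $C$ are released each time Painter colors part of $C$, and recovering the clean single-center accounting that produced $\binom{u_{n-1}+1}{2}$ forced re-marks in this more diffuse setting.
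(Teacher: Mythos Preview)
Your computations of $\spo(P_n)$ and $\spo(K_{1,n-1})$ are fine and close in spirit to the paper's (the paper derives the star value as the $s=1$ case of the more general $\spo(\Kb_r\gjoin K_s)=r+\binom{s+1}{2}+su_r$, but your direct recursion works). The real gaps are in the two extremal statements for general trees, and you have correctly flagged both of them without closing either.

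\textbf{Upper bound $\spo(T)\le\floor{3n/2}$.} Your decomposition idea runs in the wrong direction. Observation~\ref{disjoint} gives $\spo(G)\ge\spo(G_1)+\spo(G_2)$ for disjoint induced subgraphs, not $\le$; there is no general subadditivity over an edge cut. Concretely, when Lister marks a set meeting both $B$ and $T-B$, your two Painter strategies may simultaneously try to color $v\in B$ and its neighbor $w\in T-B$, and deferring one of them can raise the cost on that side above the budget you have allotted to it. You call this ``the delicate point'' but offer no mechanism to resolve it, and in fact there is no cheap fix: this is exactly why the tree upper bound needs a new idea. The paper's argument avoids decomposition entirely. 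It shows inductively that for \emph{any} connected marked set $M$ in $T$, Painter can choose an independent $I\subseteq M$ (one of the two partite sets of $T[M]$) satisfying $3\C I + o(T-I)\ge 2\C M$, where $o(\cdot)$ counts odd components; this suffices because the recursion then gives total score at most $\C M+\tfrac12(3n-3\C I-o(T-I))\le\tfrac{3n}{2}$. The inequality is obtained by an averaging trick: if $\{X,Y\}$ is the bipartition of $T[M]$ and $A,B$ partition $M$ according to whether the branch of $T-E(T[M])$ at each vertex has even or odd order, then $(3\C X+\C{A\cap X}+\C{B-X})+(3\C Y+\C{A\cap Y}+\C{B-Y})=4\C M$, so one of $X,Y$ works.

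\textbf{Lower bound $\spo(T)\ge n+u_{n-1}$.} In your second case $\nu(T)<u_{n-1}$ you propose that Lister ``mimics the single-star strategy'' around a small vertex cover $C$, but you do not carry it out, and the bookkeeping you worry about is genuinely nontrivial: Painter can color different vertices of $C$ on different rounds, releasing different blocks of leaves, and it is not clear how to force $\binom{u_{n-1}+1}{2}$ re-marks in total. The paper sidesteps this entirely with a much simpler edge-cut argument. Writing $v_n=u_{n-1}$, it shows that every $n$-vertex tree (once $n\ge5$ and $T$ is not a path) has an edge whose deletion leaves components of sizes $n_1,n_2$ with $v_{n_1}+v_{n_2}\ge v_n$; then Observation~\ref{disjoint} plus induction gives $\spo(T)\ge (n_1+v_{n_1})+(n_2+v_{n_2})\ge n+v_n$. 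The numerical inequality $v_{n_1}+v_{n_2}\ge v_n$ is verified via concavity of $g(x)=\tfrac{-1+\sqrt{8x-7}}{2}$, reducing to $v_4+v_{n-4}-1\ge v_n$ when $n_1,n_2\ge4$, with small cases handled directly.
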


In a subsequent paper, Puleo and West~\cite{PW} provide a linear-time algorithm
to compute $\spo$ on trees, via an inductive formula.  The formula yields 
characterizations of the minimizing $n$-vertex trees and the trees attaining
the maximum value $3n/2$ (which requires $n$ even).  The star is the unique
$n$-vertex tree minimizing $\spo$ whenever $n-1$ and $n-2$ are not of the form
$\CH k2$.  The $n$-vertex trees $T$ with $\spo(T)=3n/2$ are those having a
spanning acyclic subgraph in which every vertex has degree $1$ or $3$.

We do not know the complexity of computing $\spo(G)$ in general or on larger
families than trees.  It is not obvious that the decision problem is in NP.

We conjecture that Theorem~\ref{treethm} generalizes to $k$-trees.
A \textit{$k$-tree} is a graph obtained from $K_k$ by iteratively adding a
vertex whose neighborhood is a $k$-clique in the existing graph.
The \textit{join} $G\gjoin H$ of graphs $G$ and $H$ is obtained from the
disjoint union $G+H$ by making each vertex in $G$ adjacent to each vertex in
$H$.  The \textit{$r$th power} of $G$ is the graph $G^r$ with vertex set $V(G)$
where vertices are adjacent if and only the distance between them in $G$ is at
most $r$.  The graphs $K_k\gjoin \Kb_{n-k}$ and $P_n^k$ are $k$-tree analogues
of $n$-vertex stars and paths.  Our argument to compute $\spo(K_{1,n-1})$
allows us more generally to compute $\spo(K_k\gjoin \Kb_{n-k})$. 
\begin{theorem}\label{thm:split1}
Let $u_t=\floor{\FR{-1+\sqrt{8t+1}}2}$.  For $r,s\in\NN$,
\[ \spo(\csg{r}{s})=r+{s+1\choose2}+s u_r. \]
\end{theorem}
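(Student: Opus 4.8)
The plan is to prove $\spo(\csg rs)=h(r,s)$, where $h(r,s):=r+\binom{s+1}2+su_r$, by induction on the number of vertices $r+s$, using the recursion of Proposition~\ref{pr:recur}; the base case $r+s=0$ is the empty graph, where $h(0,0)=0$. The structural observation that makes this tractable is that in $\csg rs$ every independent set is either a subset of the independent part $A$ (with $|A|=r$) or a single vertex of the clique $B$ (with $|B|=s$). Hence from any position $\csg rs$, Painter's legal replies to a marked set $M$ are: color all of $A\cap M$ (the non-dominated choice among subsets of $A\cap M$, since $h(\cdot,s)$ is increasing), reaching $\csg{r-a}{s}$ with $a=|A\cap M|$; or color one vertex of $B\cap M$, reaching $\csg r{s-1}$. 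Thus the whole game stays inside the two-parameter family $\{\csg{r'}{s'}\}$, the degenerate positions $\Kb_r$ and $K_s$ included, and the induction hypothesis applies to every reachable position. Throughout I would freely use the identity $\binom{s+1}2-\binom s2=s$, the resulting $h(r,s)-h(r,s-1)=s+u_r$ and $h(r,s)-h(r-a,s)=a+s(u_r-u_{r-a})$, and the equivalent description $u_m=k\iff\binom{k+1}2\le m<\binom{k+2}2$.

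For the lower bound I would exhibit a single Lister move: mark all of $B$ together with any $u_r$ vertices of $A$ (legal since $u_r\le\binom{u_r+1}2\le r$). Painter's replies reach $\csg{r-u_r}{s}$ or $\csg r{s-1}$, which by induction have values $h(r-u_r,s)$ and $h(r,s-1)$. Since $h(r,s-1)=h(r,s)-(s+u_r)$, the $B$-reply gives exactly $h(r,s)$; and since $h(r,s)-h(r-u_r,s)=u_r+s(u_r-u_{r-u_r})$, the $A$-reply gives at least $h(r,s)$ as soon as $u_{r-u_r}\ge u_r-1$. Writing $k=u_r$, we have $r\ge\binom{k+1}2$, hence $r-u_r\ge\binom{k+1}2-k=\binom k2$, hence $u_{r-u_r}\ge k-1$, which is exactly what is needed. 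Therefore $|M|+\min(\dots)\ge(s+u_r)+h(r,s-1)=h(r,s)$, so $\spo(\csg rs)\ge h(r,s)$.

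For the upper bound I would show that \emph{every} Lister move is answerable within budget $h(r,s)$. Parametrize a marked set by $a$ vertices of $A$ and $b$ of $B$, $(a,b)\ne(0,0)$. If $b=0$, Painter colors all $a$ marked vertices; since $h(r,s)-h(r-a,s)=a+s(u_r-u_{r-a})\ge a$ (as $u$ is nondecreasing), this is within budget. If $b\ge1$, Painter takes the better of the two replies, and since $b\le s$ it suffices to verify $a+s\le\max\{h(r,s)-h(r-a,s),\,h(r,s)-h(r,s-1)\}=\max\{a+s(u_r-u_{r-a}),\,s+u_r\}$. If $a\le u_r$, the second term already dominates. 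If $a\ge u_r+1$, I claim $u_{r-a}\le u_r-1$, whence $u_r-u_{r-a}\ge1$ and the first term is at least $a+s$: indeed if instead $u_{r-a}=u_r=k$ (recall $u_{r-a}\le u_r$ always), then $r<\binom{k+2}2$ and $r-a\ge\binom{k+1}2$, forcing $a<\binom{k+2}2-\binom{k+1}2=k+1$, i.e.\ $a\le u_r$, a contradiction. Combined with the lower bound, this closes the induction.

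I expect the only real obstacle to be the bookkeeping around $u$: one must repeatedly pass between the floor-formula for $u_r$ and the threshold characterization, and exploit that consecutive thresholds $\binom{k+1}2$ and $\binom{k+2}2$ differ by $k+1$ to extract both $u_{r-u_r}\ge u_r-1$ and the implication $a>u_r\Rightarrow u_{r-a}<u_r$. Everything else is routine once one records that the game never leaves the family $\{\csg{r'}{s'}\}$, that Painter's effective choices are ``clear the marked part of $A$'' versus ``delete one vertex of $B$,'' and that Lister's extremal move is ``all of $B$ plus $u_r$ vertices of $A$.''
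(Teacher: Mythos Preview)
Your proof is correct and follows essentially the same approach as the paper's: induction on $r+s$ via the recursion of Proposition~\ref{pr:recur}, with the crux being the arithmetic of $u_r$ (your inequalities $u_{r-u_r}\ge u_r-1$ and $a>u_r\Rightarrow u_{r-a}<u_r$ are exactly the content of the paper's Lemma~\ref{lem:trinum}). The organization differs slightly: the paper first argues that Lister may as well mark all of the clique part (your $B$), reducing to a one-parameter family $g(k)=k+s+\min\{f(r-k,s),f(r,s-1)\}$, and then locates the maximizer of $g$ by finding the crossover point where $f(r-k,s)$ drops below $f(r,s-1)$; you instead treat lower and upper bounds separately, exhibiting the move $(u_r,s)$ for the lower bound and handling every marked set $(a,b)$ directly for the upper bound via the dichotomy $a\le u_r$ versus $a>u_r$. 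Both routes lead to the same threshold $u_r$ and rely on the same facts about triangular numbers, so the difference is cosmetic rather than substantive.
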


\begin{conjecture}
For $k\in\NN$ and any $k$-tree $T$ with $n$ vertices,
  \[ \spo(K_k\gjoin \Kb_{n-k})~\leq~\spo(T)~\leq~\spo(P^{k}_n). \]
\end{conjecture}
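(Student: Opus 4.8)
We sketch a plausible line of attack; both inequalities appear genuinely hard, which is why the statement is only conjectured.

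The plan is to prove the two bounds separately, in each case adapting the argument behind Theorem~\ref{treethm} (the case $k=1$) and using Theorem~\ref{thm:split1} to pin down the target value $\spo(\csg{n-k}{k})=(n-k)+\binom{k+1}{2}+k\,u_{n-k}$ that the left inequality must meet. Both directions rest on Proposition~\ref{pr:recur} together with Observation~\ref{simple}: Lister may be restricted to marking connected sets and Painter to coloring maximal independent sets, and (as follows from the recursion in the style of Observation~\ref{disjoint}) $\spo$ is additive over connected components. The base case $n=k$ is $T=K_k$, where all three quantities equal $\binom{k+1}{2}$; and when $k=n-1$ the graphs $\csg{n-k}{k}$, $T$, and $P^k_n$ all equal $K_n$, so the real content lies in the range $k<n-1$.

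For the lower bound I would exhibit, for each $k$-tree $T$ on $n$ vertices, an explicit Lister strategy scoring at least $(n-k)+\binom{k+1}{2}+k\,u_{n-k}$, verified by induction on $n$. The model is the split graph: in $\csg{n-k}{k}=K_k\gjoin\Kb_{n-k}$, Lister repeatedly marks the central $k$-clique together with a carefully chosen number~$t$ of the independent vertices, forcing Painter either to color one clique vertex (dropping to $\csg{n-k}{k-1}$) or to color those $t$ leaves (dropping to $\csg{n-k-t}{k}$); solving the resulting recursion gives the formula. In a general $k$-tree one fixes a leaf clique $Q$ of the clique tree: the simplicial vertices with neighborhood exactly $Q$ form an independent set, each adjacent to precisely $Q$, so $Q$ and those vertices induce a copy of $\csg{a}{k}$ attached to the rest of $T$ only along $Q$. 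Lister plays the split-graph move on this branch and recurses. The first obstacle is that coloring a vertex of $Q$ also changes the rest of $T$, and that the ``leaf'' vertices of $T$ are spread across many cliques, whereas the $k\,u_{n-k}$ term in the target must be harvested from all $n-k$ non-clique vertices together; hence Lister cannot simply finish one branch and defer the rest, and a naive ``delete one simplicial vertex and induct'' fails to track the increments of $\spo(\csg{n-k}{k})$ — already for $k=1$ the analogous step breaks down for $P_7\subset P_8$. The strategy must instead be organized globally, presumably around a longest path in the clique tree.

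For the upper bound I would induct on $n$ via Proposition~\ref{pr:recur}: given a connected marked set $M$ in a $k$-tree $T$, Painter should pick a maximal independent subset $I$ of $G[M]$ with $\C M+\spo(T-I)\le\spo(P^k_n)$. Since $T-I$ is a partial $k$-tree, each of its components is a spanning subgraph of a $k$-tree (or complete graph) on the same number of vertices, so by monotonicity of $\spo$ under subgraphs, additivity over components, and induction, $\spo(T-I)\le\sum_j\spo(P^k_{n_j})$ with $\sum_j n_j=n-\C I$. What remains is a combinatorial lemma about $k$-trees together with an estimate for $\spo(P^k_n)$ (which should be linear in $n$ for fixed $k$, at a rate strictly below $k+1$ per vertex): every connected $M$ must admit a maximal independent subset $I$ whose deletion fragments $T$ finely enough that $\C M+\sum_j\spo(P^k_{n_j})\le\spo(P^k_n)$ — i.e.\ $\spo$ of path powers is ``superadditive with a surplus'' that pays for the $\C M$ marks. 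This is precisely the hard part of the tree case: if Lister marks a long path and Painter merely colors a maximum independent subset of it, the floors in $\sum_j\spo(P_{n_j})$ sum too close to $\spo(P_{n-\C I})$, so $I$ must be chosen both large and well placed to disconnect, and it may well be that a single round of the recursion is insufficient and a potential-function argument is needed. I expect this to be the main obstacle; for $k$-trees it is compounded because deleting an independent set leaves not a disjoint union of $k$-trees but pieces glued along cliques of size up to $k-1$, so one must trade off size, maximality, and disconnection of $I$ against the precise per-vertex rate of $\spo(P^k_n)$ — and, on the other side, produce the matching global Lister strategy — which is why the statement is only conjectured here.
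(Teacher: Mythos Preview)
This statement is presented in the paper as an open \emph{conjecture}, not a theorem; the paper offers no proof and immediately moves on to a remark about $\spo(P_n^k)$. You correctly recognize this and frame your write-up as a sketch of possible approaches with explicitly identified obstacles rather than as a completed proof. There is therefore no proof in the paper to compare your attempt against.

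For what it is worth, your outlined strategies --- Lister playing the split-graph move on a leaf clique for the lower bound, and Painter fragmenting via a well-chosen maximal independent set for the upper bound --- are the natural generalizations of the paper's $k=1$ arguments in Sections~\ref{sec:treeupper} and~\ref{sec:treelower}, and you have put your finger on the genuine difficulties: the Lister strategy must be organized globally because the increments of $\spo(\csg{n-k}{k})$ are not captured by peeling off one simplicial vertex at a time, and on the Painter side the ``superadditivity with surplus'' of $\spo(P_n^k)$ that must absorb $\C M$ is precisely the delicate parity/floor bookkeeping that already makes the $k=1$ upper bound nontrivial. Since the paper leaves the conjecture open, there is no gap to report beyond the ones you yourself flag.
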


An easy lower bound for $\spo(P_n^k)$ follows from Observation~\ref{disjoint}.
Since $P_n^k$ contains $\FL{\FR n{k+1}}$ disjoint copies of $K_{k+1}$, we have
$\spo(P_n^k)\ge\spo(\floor{\FR n{k+1}}K_{k+1})+\spo(K_r)$, where
$r\equiv n\mod{(k+1)}$, and we conjecture that equality holds.  This formula
reduces to the correct answer for $k=1$.

In Section~\ref{sec:bip}, we study the slow-coloring game on complete bipartite
graphs.
\begin{theorem}\label{thm:bip}
Let $u_t=\floor{\FR{-1+\sqrt{8t+1}}2}$.  For $r\ge s>0$,
\[
r+\FR{5s-3}2+u_{r-s}\le\spo(K_{r,s})\le r+s+2\sqrt{rs}.
\]
\end{theorem}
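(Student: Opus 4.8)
I need to prove two bounds on $\spo(K_{r,s})$ for $r\ge s>0$: a lower bound of $r+\frac{5s-3}{2}+u_{r-s}$ and an upper bound of $r+s+2\sqrt{rs}$.

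For the **upper bound**, I'd describe a Painter strategy. Let the parts be $A$ (size $r$) and $B$ (size $s$). When Lister marks a set $M$, let $M_A = M\cap A$ and $M_B = M\cap B$. Painter's independent set must lie entirely within $A$ or entirely within $B$ (since $K_{r,s}$ is complete bipartite), so Painter colors either all of $M_A$ or all of $M_B$. A natural strategy: Painter maintains a "potential" and colors the side that keeps the potential low. Think of it as two token budgets; when a side gets marked, Painter either colors all of it (removing those vertices) or pays $|M_{\text{side}}|$ and keeps them. The standard approach is to let Painter color side $A$ whenever $|M_A|$ is large relative to the number of remaining $B$-vertices, and side $B$ otherwise, and to track a quantity like (remaining $A$-vertices) times (something) plus (remaining $B$-vertices) times (something). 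The $2\sqrt{rs}$ term strongly suggests an AM-GM/quadratic optimization: if Painter decides to finish off $B$ in $k$ rounds each marking all of $B$ (at cost $ks$ roughly) while $A$ shrinks, the balance point between $ks$ and the residual cost of clearing $A$ optimizes near $k\approx\sqrt{r/s}$, giving the $2\sqrt{rs}$ cross term. I would set up this explicit potential/weighting and verify it is non-increasing under Painter's response, starting at $r+s+2\sqrt{rs}$ and ending at $0$.

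For the **lower bound**, I'd give a Lister strategy. One natural idea: Lister repeatedly marks all of $B$ plus cleverly chosen subsets of $A$. If Painter ever colors (part of) $A$, the $A$-side is being whittled down and Lister can eventually fall back to playing optimally on the remaining $K_{r',s}$ or on the residual independent set in $A$ — here Observation~\ref{disjoint} and the known value $\spo(K_{r-s}) = \binom{r-s+1}{2}$-type facts, or rather $\spo(\Kb_{r-s})$, enter: an edgeless set of size $r-s$ contributes $u_{r-s}$ (since $\spo(\Kb_m)=m+u_m$? no — $\spo(\Kb_m)$ is exactly the one-part case and equals $u_m + m$... actually $\spo(K_{1,m-1})$ in Theorem~\ref{treethm} uses $u_{m-1}$). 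The $u_{r-s}$ summand signals that after $s$ rounds of forcing, an edgeless remnant of size about $r-s$ survives and contributes its own slow-coloring cost. Meanwhile the $\frac{5s-3}{2}$ term looks like it comes from forcing $B$ to be colored in a controlled number of rounds while extracting $|B|$ points each time plus lower-order bookkeeping — reminiscent of the $P_k\cart K_2$ "$5$ per ear" phenomenon mentioned in the Example. So Lister's strategy: for the first several rounds, mark $B\cup(\text{all of }A)$ or $B\cup(\text{a large subset of }A)$, forcing Painter to either clear $B$ (earning $\approx s$ each time and making progress toward the $A$-only subgame) or clear part of $A$ (earning $\approx r$, even better). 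I'd make this dichotomy precise with an inductive/potential argument: define $\Phi(a,b)$ = claimed lower bound as a function of remaining part sizes $a,b$, exhibit Lister's move, and show that for either Painter response the score gained plus $\Phi(\text{new state})$ is at least $\Phi(a,b)$.

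**Main obstacle.** The hard part is the lower bound, specifically getting the constant $5/2$ and the $-3/2$ exactly right — this requires a carefully calibrated Lister strategy and a matching potential function, handling small-$s$ and parity boundary cases (and the transition into the $u_{r-s}$ edgeless remnant) without slack. The upper bound is more mechanical: the main work there is guessing the correct potential so that the telescoping closes at exactly $r+s+2\sqrt{rs}$, which amounts to solving a small optimization, but verifying the potential decreases under every Painter-optimal response is routine case analysis on whether $|M_A|$ or $|M_B|$ dominates.
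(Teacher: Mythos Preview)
Your proposal is a reasonable outline but has a genuine gap in the lower bound, and the upper bound is too vague to count as a proof.

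\textbf{Upper bound.} You have the right shape: a threshold strategy for Painter and an inductive (equivalently, potential-function) accounting. But you never state the threshold. The paper's proof sets, in response to Lister marking $j$ vertices in the large side $X$ and $i$ in the small side $Y$, the threshold $J=i\sqrt{r/s}$: Painter colors in $X$ iff $j\ge J$. The entire content of the argument is then the pair of inequalities
\[
J+2\sqrt{r(s-i)}\ \le\ i+2\sqrt{(r-J)s}\ \le\ 2\sqrt{rs},
\]
which close the induction. Until you write down a specific $J$ and verify these, you have not proved anything; ``solve a small optimization'' is where all the work is.

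\textbf{Lower bound.} Here your plan would fail as stated. You propose that Lister mark all of the small side $B$ together with a large subset of $A$. But if Lister ever offers all of $B$, Painter may simply color $B$ in one stroke, leaving the edgeless graph $\Kb_r$ with $\spo(\Kb_r)=r$. The resulting total is at most $r+s+|M_A|$, and balancing against the other branch does not produce the $\tfrac{5}{2}s$ coefficient; for $r=s$ you get at best about $3r$, not $\tfrac{7r-3}{2}$. Your reading of the $u_{r-s}$ term as the cost of an ``edgeless remnant of size $r-s$'' is also off: $\spo(\Kb_m)=m$, not $m+u_m$.

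The paper's Lister strategy is the opposite of yours on the small side: when $r\ge s+2$, Lister marks exactly \emph{one} vertex of $B$ together with $u_{r-s}$ vertices of $A$. The point is that coloring in $B$ then only peels off a single vertex, leaving $K_{r,s-1}$, so the induction hypothesis still bites. The quantity $u_{r-s}$ is the number of $A$-vertices marked, chosen so that (via Lemma~\ref{lem:trinum}, which gives $u_t\le 1+u_{t-u_t}$) the branch where Painter colors in $A$ also meets the target $f(r,s)$. The boundary cases $r=s+1$ and $r=s$ require separate small moves (mark $2+2$ and $1+1$, respectively). So the missing idea is precisely this calibrated ``one vertex in $B$, $u_{r-s}$ in $A$'' move; without it the inductive dichotomy you describe does not close.
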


Supported by computational data, we conjecture that the upper bound is
asymptotically optimal, at least when $r=s$, yielding $\spo(K_{r,r})\sim 4r$.
This bound is asymptotic to the upper bound in Theorem~\ref{sharpbds} (and to
the number of vertices times the chromatic number).  We ask whether more
generally the complete $k$-partite subgraphs with parts of equal size satisfy
$\spo(G)\sim k\C{V(G)}$ as $\C{V(G)}\to\infty$.

In a further subsequent paper, Gutowski et al.~\cite{GKWZZ} study the bounds on
$\spo(G)$ for families of graphs containing forests.  A graph is
{\it $d$-degenerate} if every subgraph has a vertex of degree at most $d$.
Inductively, $\chi(G)\le 1+d$ when $G$ is $d$-degenerate, so 
Theorem~\ref{sharpbds} yields $\spo(G)\le (1+d)\C{V(G)}$ when $G$ is 
$d$-degenerate, and the disjoint union of copies of $K_{d+1}$ show that the
value can be as high as $(1+d/2)\C{V(G)}$.  In~\cite{GKWZZ}, the bound
$\spo(G)\le (1+3d/4)\C{V(G)}$ is proved for $d$-degenerate graphs.

A graph is {\it outerplanar} if it embeds in the plane with all vertices
lying on a single face.  It is an elementary exercise that outerplanar graphs
are $2$-degenerate, so the result mentioned above implies $\spo(G)\le 2.5n$
when $G$ is an $n$-vertex outerplanar graph.  In~\cite{GKWZZ}, the bound is
improved to $7n/3$.  Disjoint copies of $K_3$ show that the value can be as
large as $2n$, which is conjectured optimal.

The famous Four Color Theorem states $\chi(G)\le 4$ when $G$ embeds in the
plane, so $\spo(G)\le 4n$ for $n$-vertex planar graphs.  In~\cite{GKWZZ}, the
bound is improved to $3.9857n$, where the coefficient more precisely is
$(13+4\sqrt3)/5$.  Disjoint copies of $K_4$ show that the value can be as large
as $2.5n$, which is conjectured optimal.

\section{General Bounds}\label{sec:general} \label{sec:alpha2}
We begin with a stronger lower bound than stated in Section~\ref{sec:intro},
using the \emph{chromatic sum} of $G$, defined by Kubicka~\cite{KubT}
(see~\cite{Kub} for a survey).
\begin{definition}
The \emph{chromatic sum} of a graph $G$, written $\Sigma(G)$, is the minimum of
$\sum_{v \in V(G)}c(v)$ over all proper colorings $c$ using positive integers.
\end{definition}

The chromatic sum is the outcome of the slow-coloring game when Lister
follows the strategy of always marking the entire remaining graph.
Let $G[M]$ denote the subgraph of $G$ induced by a vertex subset $M$.

\begin{theorem}\label{thm:general}
For every $n$-vertex graph $G$,
\[ \Sigma(G) \leq \spo(G) \leq n\rho(G). \]
Equality holds in the lower bound when all components of $G$ are complete.
Equality holds in the upper bound when $G$ has no edges.
\end{theorem}
\begin{proof}
Given that Lister always marks all remaining vertices, let $V_i$ be the set
of vertices colored by Painter on round $i$.  The vertices in $V_i$ are marked
$i$ times.  Thus the total cost is at least $\Sigma(G)$.  Equality holds for
a disjoint union of complete graphs, because Painter always colors one vertex 
in each component having a marked vertex.

For the upper bound, let $r = \rho(G)$.  Given any marked set $M$, the
\emph{greedy strategy} for Painter colors a largest independent set in $G[M]$.
The definition of $\rho(G)$ yields $\alpha(G[M])\ge \C{M}/r$.  For any game
played against this strategy, let $m_1,\ldots,m_t$ be the sizes of the marked
sets in the successive rounds.  In round $i$ Painter colors at least $m_i/r$
vertices, so $\sum_{i=1}^t \FR{m_i}r \leq n$.  Multiplying by $n$ shows that
Lister scores at most $nr$.

Equality for $G=\Kb_n$ is trivial.  Conversely, if equality holds in the upper
bound, then exactly $m_i/r$ vertices must be colored in round $i$.  In
particular, in the last round, all $m_i$ marked vertices are colored, requiring
$m_i/r=m_i$, and hence $r=1$.  For any graph $G$ having an edge, $\rho(G)\ge2$,
so equality holds only for edgeless graphs.
\end{proof}

\begin{corollary}\label{n2alph}
$\Sigma(G)\ge \FR n2(1+\FR n{\alpha(G)})$ when $G$ has $n$ vertices, and 
hence $\spo(G)\ge\frac{n^2}{2\alpha(G)} + \frac{n}{2}$.
\end{corollary}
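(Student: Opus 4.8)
The plan is to establish the lower bound $\Sigma(G)\ge \FR n2(1+\FR n{\alpha(G)})$ and then combine it with the inequality $\spo(G)\ge\Sigma(G)$ from Theorem~\ref{thm:general} to obtain the stated bound on $\spo(G)$. So the real work is the lower bound on the chromatic sum.

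For the chromatic-sum estimate, fix an optimal proper coloring $c$ of $G$ using positive integers, and for each color $i$ let $V_i=\{v\st c(v)=i\}$ and $n_i=\C{V_i}$. Each $V_i$ is independent, so $n_i\le\alpha(G)$ for every $i$, and $\sum_i n_i=n$. The chromatic sum is $\Sigma(G)=\sum_i i\,n_i$. The key step is to observe that, subject to $n_i\le\alpha(G)$ and $\sum_i n_i=n$, the quantity $\sum_i i\,n_i$ is minimized by pushing as much mass as possible into the low-indexed color classes: that is, by taking $n_1=n_2=\cdots=n_k=\alpha(G)$ for $k=\floor{n/\alpha(G)}$ with a leftover class. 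A clean way to see this without casework is to note $\sum_i i\,n_i = \sum_i n_i + \sum_i (i-1) n_i$ and bound $\sum_i (i-1)n_i$ from below. Writing $N_i = \sum_{j>i} n_j$ for the number of vertices receiving color greater than $i$, we have $\sum_i (i-1)n_i = \sum_{i\ge 1} N_i$; since at most $\alpha(G)$ vertices can use each color, $N_i \ge n - i\,\alpha(G)$, and summing the positive part of this over $i$ gives $\sum_{i\ge1} N_i \ge \binom{k}{2}\alpha(G) + k r$ where $n = k\,\alpha(G) + r$ with $0\le r<\alpha(G)$. Adding back $\sum_i n_i = n$ yields $\Sigma(G)\ge n + \binom{k}{2}\alpha(G)+kr$, and a short computation shows the right-hand side is at least $\frac n2\big(1+\frac n{\alpha(G)}\big)$; in fact the extremal configuration gives exactly $\frac{k+1}{2}n + \frac{k+1}{2}r - \frac r2$ or the like, from which the clean bound $\frac{n}{2}+\frac{n^2}{2\alpha(G)}$ follows by discarding the (nonnegative) rounding slack.

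Alternatively, and perhaps more transparently, one can avoid the $N_i$ bookkeeping entirely: let $\alpha=\alpha(G)$ and note that among any set of $j$ distinct positive integers, the sum is at least $1+2+\cdots+j=\binom{j+1}{2}$, so if we list the colors $c(v)$ in nondecreasing order as $a_1\le a_2\le\cdots\le a_n$, then for each block of $\alpha$ consecutive terms, the relevant colors are bounded below by consecutive integers. Concretely, $a_i \ge \ceil{i/\alpha}$ for every $i$ because at most $\alpha$ vertices share a color, so fewer than $\ceil{i/\alpha}$ distinct colors cannot cover the first $i$ vertices. Hence $\Sigma(G) = \sum_{i=1}^n a_i \ge \sum_{i=1}^n \ceil{i/\alpha} \ge \frac{1}{\alpha}\sum_{i=1}^n i = \frac{n(n+1)}{2\alpha} \ge \frac{n^2}{2\alpha}+\frac n2$ once we also use $\frac{n}{2\alpha}\cdot\text{(correction)}$—here one checks that $\sum_{i=1}^n\ceil{i/\alpha}\ge \frac12\left(\frac{n^2}{\alpha}+n\right)$ directly, which holds since $\ceil{i/\alpha}\ge i/\alpha$ and we can sharpen using the ceiling to recover the additive $n/2$. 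I expect the main (very minor) obstacle to be packaging this rounding argument cleanly: the inequality $\sum_{i=1}^{n}\ceil{i/\alpha} \ge \frac{n^2}{2\alpha}+\frac n2$ is true but requires either summing the ceilings exactly in terms of $\floor{n/\alpha}$ or observing that pairing term $i$ with term $n+1-i$ in the sum $\sum \ceil{i/\alpha}$ already gives an average of at least $(n+1)/(2\alpha)\cdot\alpha$-type bound; since everything in sight is a finite deterministic sum, no genuine difficulty arises beyond choosing the slickest bookkeeping.
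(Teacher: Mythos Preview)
Your first argument is essentially the paper's proof: both rewrite $\sum_i i\,n_i$ as a sum of tails $\sum_i \C{\bigcup_{j\ge i}V_j}$, bound each tail below by $n-(i-1)\alpha$ using $\C{V_j}\le\alpha$, and then do the arithmetic with $k=\floor{n/\alpha}$ and the remainder to extract $\frac{n^2}{2\alpha}+\frac n2$; the paper carries out that ``short computation'' explicitly via $\epsilon=\frac n\alpha-k$, obtaining the slack term $\frac{\alpha\epsilon}2(1-\epsilon)\ge0$, which is exactly the $\frac{r(\alpha-r)}{2\alpha}$ you would get if you finished your computation.

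Your ``alternative'' route via $a_i\ge\ceil{i/\alpha}$ is also valid in principle, but note that the chain you wrote, $\sum_{i=1}^n\ceil{i/\alpha}\ge\frac1\alpha\sum_{i=1}^n i=\frac{n(n+1)}{2\alpha}\ge\frac{n^2}{2\alpha}+\frac n2$, fails at the last step whenever $\alpha>1$; you correctly sensed this and hedged, but to actually close it you must sum the ceilings exactly as $\alpha\binom{k+1}2+(k+1)r$ and compare, which again reduces to the same $\frac{r(\alpha-r)}{2\alpha}\ge0$ slack. Since your first argument already suffices, the alternative is redundant rather than wrong.
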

\begin{proof}
We use induction on $n$; the claim is trivial for $n=0$.  For $n>0$, let $I$ be
the set of vertices receiving color $1$ in a proper coloring of $G$ with
minimum sum, and let $a=\alpha(G)$.  Note that
$\Sigma(G)=\C I+(n-\C I)+\Sigma(G-I)$.  Using the induction hypothesis, 
$\Sigma(G)\ge n+\FR12(n-\C I)(1+\FR{n-\C I}{\alpha(G-I)})$.  Minimizing the 
numerator and maximizing the denominator, we have
$\Sigma(G)\ge n+\FR12(n-a)(1+\FR{n-a}a)=\FR n2(1+\FR na)$.
\end{proof}

The \textit{binomial random graph model} (see~\cite{bollobas-book}) is the
probability space $\GG(n,p)$ generating graphs with vertex set $\{1,\dots,n\}$
by letting vertex pairs
be edges with probability $p$, independently.  An event occurs \textit{with
high probability} if its probability in $\GG(n,p)$ tends to $1$ as $n\to\infty$.

\begin{corollary}
For fixed $p \in (0,1)$, there is a positive constant $c$ such that for
$G$ sampled from $\GG(n,p)$, with high probability
$c\chi(G) \leq \FR{\spo(G)}n \leq \chi(G)$.
\end{corollary}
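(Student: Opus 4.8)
The plan is to get the upper bound deterministically and the lower bound from Corollary~\ref{n2alph} together with two standard estimates on the random graph. For the upper bound, observe that $\rho(G)\le\chi(G)$ for \emph{every} graph $G$: if $H\esub G$, then the color classes of a proper coloring of $G$ with $\chi(G)$ colors restrict to independent sets in $H$, so $\C{V(H)}/\alpha(H)\le\chi(H)\le\chi(G)$, and taking the maximum over $H$ gives $\rho(G)\le\chi(G)$. Combined with $\spo(G)\le n\rho(G)$ from Theorem~\ref{thm:general}, this yields $\spo(G)/n\le\chi(G)$ with no appeal to randomness at all.

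For the lower bound, Corollary~\ref{n2alph} gives $\spo(G)/n\ge\FR n{2\alpha(G)}$, so it suffices to produce a constant $c>0$ with $\FR n{2\alpha(G)}\ge c\,\chi(G)$ with high probability. Write $b=1/(1-p)$; since $p\in(0,1)$ is fixed, $b>1$ is a constant and $\log_b n\to\infty$. I will invoke two classical facts about $G$ drawn from $\GG(n,p)$ with $p$ fixed: with high probability $\alpha(G)\le(2+o(1))\log_b n$ (the first-moment count of independent sets, with a matching second-moment lower bound), and with high probability $\chi(G)\le(1+o(1))\,\FR n{2\log_b n}$, a result of Bollob\'as (even the cruder greedy bound $\chi(G)\le(1+o(1))\,\FR n{\log_b n}$ would suffice). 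Intersecting these two events, each of probability tending to $1$, with the deterministic upper bound, we obtain with high probability
\[
\FR{\spo(G)}n~\ge~\FR n{2\alpha(G)}~\ge~\FR n{(4+o(1))\log_b n}~\ge~\left(\FR12-o(1)\right)\chi(G),
\]
so any fixed $c<1/2$ — for instance $c=1/3$ — gives $c\,\chi(G)\le\spo(G)/n\le\chi(G)$ once $n$ is large enough, i.e. with high probability. (Concretely, for a target $c=1/3$ one picks $\epsilon>0$ small so that the displayed chain, evaluated with $\alpha(G)\le(2+\epsilon)\log_b n$ and $\chi(G)\le(1+\epsilon)\FR n{2\log_b n}$, exceeds $\tfrac13\chi(G)$, and the two corresponding events each have probability tending to $1$.)

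There is no serious obstacle: the whole content is quoting the right random-graph estimates for $\alpha$ and $\chi$ and checking that the $o(1)$ slack still leaves room for a fixed $c$. The one point worth flagging is that $\spo(G)/n\ge c\,\chi(G)$ cannot hold for \emph{all} graphs — take a large clique together with many isolated vertices, where $\chi$ is large but $n/\alpha$ is close to $1$ — so the argument genuinely relies on $\alpha(\GG(n,p))$ being only logarithmic, which is exactly what forces $n/\alpha(G)$ to lie within a constant factor of $\chi(G)$.
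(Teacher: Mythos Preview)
Your proof is correct and follows essentially the same approach as the paper: the upper bound comes deterministically from $\rho(G)\le\chi(G)$ together with Theorem~\ref{thm:general}, and the lower bound combines Corollary~\ref{n2alph} with the standard concentration of $\alpha$ and $\chi$ in $\GG(n,p)$. Your version is in fact more explicit about the constants (tracking $b=1/(1-p)$ and showing that any $c<1/2$ works), and your closing remark explaining why the lower bound cannot be made deterministic is a nice addition.
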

\begin{proof}
The upper bound always holds, by Theorem~\ref{thm:general}, since
$\chi(G)\ge \C{V(G)}/\alpha(G)$.  For the lower bound, it suffices by
Corollary~\ref{n2alph} to obtain a constant $c'$ such that
$c'\chi(G)\leq \frac{n}{2\alpha(G)} + \frac{1}{2}$ with high probability.
By well-known results on the concentration of the clique number and the
chromatic number in
$\GG(n,p)$~\cite{bollobas-chromatic,bollobas-clique,bollobas-book}, there are
positive constants $c_1$ and $c_2$ (depending on $p$) such that for any
positive $\epsilon$, with high probability $\chi(G)$ is within a fraction
$1+\epsilon$ of $c_1 \frac{n}{\log n}$ and $\alpha(G)$ is within a
fraction $1+\epsilon$ of $c_2 \log n$.  The result follows.
\end{proof}



We next determine $\spo(G)$ when $\alpha(G)\le2$, showing that the lower bound
from Theorem~\ref{thm:general} holds with equality in that case.
Let $\Gb$ denote the complement of a graph $G$.

\begin{lemma}\label{alpha2Sig}
If $\alpha(G) \leq 2$, then $\Sigma(G) = {n-q+1 \choose 2} + {q+1 \choose 2}$,
where $q$ is the maximum size of a matching in $\Gb$.
\end{lemma}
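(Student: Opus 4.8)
The statement asks us to show that if $\alpha(G) \le 2$, then the chromatic sum $\Sigma(G)$ equals $\binom{n-q+1}{2} + \binom{q+1}{2}$, where $q$ is the maximum matching size in $\overline{G}$. The key structural observation is that when $\alpha(G) \le 2$, every color class in a proper coloring of $G$ has at most two vertices, and a color class of size $2$ corresponds exactly to an edge of $\overline{G}$. So a proper coloring of $G$ using positive integers with color classes $V_1, V_2, \ldots$ (allowing empty classes, but it never helps to skip) is the same thing as a partition of $V(G)$ into parts of size $1$ or $2$, where each part of size $2$ is an edge of $\overline{G}$ — in other words, it corresponds to a matching $N$ in $\overline{G}$ (the size-$2$ classes) together with an arbitrary ordering of all the classes.

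\textbf{Main computation.} First I would fix a matching $N$ in $\overline{G}$ with $|N| = k$, so the coloring has $k$ classes of size $2$ and $n - 2k$ classes of size $1$, for a total of $n - k$ classes. To minimize $\sum_v c(v)$, we assign the smallest colors to the largest classes: the $k$ size-$2$ classes should receive colors $1, \ldots, k$ and the singletons should receive colors $k+1, \ldots, n-k$. This gives cost $2(1 + 2 + \cdots + k) + ((k+1) + \cdots + (n-k)) = 2\binom{k+1}{2} + \left[\binom{n-k+1}{2} - \binom{k+1}{2}\right] = \binom{k+1}{2} + \binom{n-k+1}{2}$. So the minimum over all colorings is $\min_{0 \le k \le q} \left[\binom{k+1}{2} + \binom{n-k+1}{2}\right]$, where $q$ is the largest attainable $k$, i.e.\ the maximum matching size in $\overline{G}$.

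\textbf{The remaining step.} It then remains to show this minimum is attained at $k = q$, i.e.\ that $g(k) := \binom{k+1}{2} + \binom{n-k+1}{2}$ is decreasing in $k$ on the relevant range. Since $g(k+1) - g(k) = (k+1) - (n-k) = 2k + 1 - n$, the function decreases as long as $k < (n-1)/2$. Because $\overline{G}$ has a matching of size $q$, it has at least $2q$ vertices, so $n \ge 2q$, giving $q \le n/2$; I need the slightly sharper fact that $g(q) \le g(k)$ for all $k < q$, which holds because $2k+1-n \le 2k+1-2q \le -1 < 0$ for every $k \le q-1$. Hence the minimum is $g(q) = \binom{q+1}{2} + \binom{n-q+1}{2}$, as claimed.

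\textbf{Expected obstacle.} The only subtlety is justifying cleanly that an optimal proper coloring may be taken to have contiguous color classes with the two-element classes coming first (no skipped colors, sizes weakly decreasing); this is a standard exchange argument — if class $i$ is empty or smaller than class $j$ for some $i < j$, swapping their colors does not increase the sum — but it should be stated explicitly so that the reduction to the one-parameter optimization over matching sizes is rigorous. Everything after that reduction is the routine binomial manipulation above.
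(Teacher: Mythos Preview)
Your proposal is correct and follows essentially the same route as the paper's proof: both observe that color classes have size at most $2$ (so the size-$2$ classes form a matching in $\overline G$), arrange classes in weakly decreasing order via an exchange argument, compute the cost for a given number $k$ of size-$2$ classes as $\binom{k+1}{2}+\binom{n-k+1}{2}$, and then argue this is minimized at $k=q$ using $q\le n/2$. Your version is simply more explicit about the difference $g(k+1)-g(k)=2k+1-n$, whereas the paper condenses that monotonicity step into a one-line remark.
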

\begin{proof}
When $\alpha(G)\le2$, all independent sets have size at most $2$.  To minimize
the sum of the colors, the color classes of size $2$ should be given the lowest
colors, and the largest possible number of disjoint classes of size $2$ should
be used.  This largest number is $q$, and the remaining vertices must have
distinct colors.  Thus $q$ vertices receive colors $1$ through $q$, and the
remaining vertices receive colors $1$ through $n-q$.  The result follows.
\end{proof}


\begin{lemma}\label{lem:multipart2}
If $G_{n,q}$ is the $n$-vertex complete multipartite graph with $q$ parts of
size $2$ and $n-2q$ parts of size $1$, then
$\spo(G_{n,q}) = {n-q+1 \choose 2} + {q+1 \choose 2}$.
\end{lemma}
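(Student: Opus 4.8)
The plan is to exhibit an explicit Painter strategy on the complete multipartite graph $G$ and bound the total score Lister can extract against it. Label the $q$ parts of size $2$ as $\{a_i,b_i\}$ for $i=1,\dots,q$, and the $n-2q$ singleton parts as $z_1,\dots,z_{n-2q}$. Painter's strategy should be greedy in a carefully chosen sense: whenever Lister marks a set $M$, Painter colors a maximal independent subset of $M$, but breaking ties so as to preserve symmetry between $a_i$ and $b_i$ inside each size-$2$ part for as long as possible. Since $\alpha(G)=2$ (assuming $q\geq 1$; the case $q=0$ is $K_n$, where the bound reads $\binom{n+1}{2}=\Sigma(K_n)=\spo(K_n)$ and is already known), any independent set Painter colors has size at most $2$, and it has size $2$ only if it equals some $\{a_i,b_i\}$ with both currently uncolored.

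The key structural observation I would use: at any point in the game, the uncolored subgraph is again a complete multipartite graph of the same form — some number $q'\leq q$ of surviving full size-$2$ parts, plus some number of singletons (the surviving $z_j$'s together with the parts $\{a_i,b_i\}$ that have been reduced to one vertex). So the situation is closed under play, and one can try to set up an induction on $n$ (or on $n+q$, or on the number of vertices plus the number of size-$2$ parts). The target quantity $\binom{n-q+1}{2}+\binom{q+1}{2}$ should be shown to satisfy the right recursive inequality: if Lister marks $M$ and Painter's reply deletes $I$, then $|M| + (\text{value on }G-I)$ is at most the target. Writing $F(n,q)=\binom{n-q+1}{2}+\binom{q+1}{2}$, one checks $F(n,q)-F(n-1,q) = n-q$ and $F(n,q)-F(n-2,q-1) = n-q + (n-1)-(q-1)+1-\text{something}$; the point is that deleting a size-$2$ independent set costs Painter the "right" amount. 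The heart of the argument is to verify that for \emph{every} nonempty $M$, Painter has a response $I$ with $|M| + F(G-I) \leq F(G)$, i.e. $|M| \leq F(G) - F(G-I)$.

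To make that inequality go through, I would split on whether $M$ contains both endpoints of some surviving size-$2$ part. If it does, Painter colors such a pair $\{a_i,b_i\}$; then $F$ drops from $F(n,q)$ to $F(n-2,q-1)$, a drop of $(n-q)+(n-q)-1 = 2(n-q)-1$... and we need $|M|\leq 2(n-q)-1$, which may \emph{fail} when $M$ is large — so Painter must be cleverer, coloring \emph{several} disjoint pairs at once when $M$ contains many of them, and coloring one extra singleton if one is available. If $M$ contains a singleton part or a one-vertex-remaining part, color that too. The genuinely tight case, and the main obstacle, is when $M$ is an independent-in-$\Gb$-ish configuration forcing Painter to delete few vertices while $|M|$ is as large as possible: concretely, $M$ picks exactly one vertex from each of many size-$2$ parts (so Painter can delete only one of them) plus possibly all singletons. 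I expect the worst case to be $M = V(G)$ minus nothing, or $M$ meeting every size-$2$ part in one vertex — there Painter deletes a single vertex and $|M|$ can be as large as $n-q$ or $n$, and one must check $n - q \le F(n,q)-F(n-1,q) = n-q$ with equality, and that the all-of-$V(G)$ move gives $n \le F(n,q) - F(n-1,q') $ for the appropriate $q'$ (here Painter deletes a full pair plus isolated-type vertices as available). Verifying that no Lister move beats the bound — in particular handling the mixed moves where $M$ hits some pairs doubly and others singly — is the calculation that needs care; everything else (the closure property, the base case $n\le 2$, the reduction to complete multipartite graphs already quoted from the paragraph before the lemma) is routine. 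Once the recursive inequality $|M| + \min_I F(G-I) \le F(G)$ is established for all $M$, Proposition~\ref{pr:recur} and induction give $\spo(G)\le F(n,q)$ immediately.
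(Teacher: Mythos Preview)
Your approach is the paper's approach --- induction on $n$ with the potential $F(n,q)=\binom{n-q+1}{2}+\binom{q+1}{2}$ and a case split on Lister's first move --- but you derail yourself with an arithmetic slip. The drop $F(n,q)-F(n-2,q-1)$ is not $2(n-q)-1$; it is
\[
\left[\tbinom{n-q+1}{2}-\tbinom{(n-2)-(q-1)+1}{2}\right]+\left[\tbinom{q+1}{2}-\tbinom{q}{2}\right]=(n-q)+q=n,
\]
and $|M|\le n$ holds trivially. So when $M$ contains both vertices of some size-$2$ part, Painter simply colors that pair and the recursive inequality is immediate; there is no obstacle and no need to be ``cleverer''. (Your proposed fix of coloring several disjoint pairs at once is in any case impossible: you already noted $\alpha(G)\le 2$, so Painter deletes at most two vertices per round.)

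With that correction the case analysis is clean and matches the paper's exactly. If $M$ contains a full size-$2$ part, color it; the drop is $n\ge|M|$. Otherwise $M$ meets each size-$2$ part in at most one vertex. If $M$ also contains a singleton part, color that singleton; then $|M|\le q+(n-2q)=n-q$ and the drop $F(n,q)-F(n-1,q)=n-q$ suffices. If $M$ contains no singleton either (so $M$ hits only size-$2$ parts, one vertex each), color one such vertex; then $|M|\le q$ and the drop $F(n,q)-F(n-1,q-1)=q$ suffices. That is the whole proof.
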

\begin{proof}
The lower bound follows from Theorem~\ref{sharpbds} and Lemma~\ref{alpha2Sig}.
For the upper bound, let $f(n,q) = \CH{n-q+1}2+\CH{q+1}2$.  We prove
$\spo(G_{n,q}) \leq f(n,q)$ by induction on $n$.  The claim is trivial
for $n\le1$.



For $n>1$, let $s$ be the total score of the game.  Consider the first round.
If Lister marks some two nonadjacent vertices, then Painter colors such a pair,
yielding $s\le n+\spo(G_{n-2,q-1})=(n-q)+q+f(n-2,q-1)=f(n,q)$.  If Lister marks
at most one vertex from each part, including a part of size $1$, then Painter
colors such a part, yielding $s\le n-q+\spo(G_{n-1,q})=n-q+f(n-1,q)=f(n,q)$.
If Lister marks only single vertices from parts of size $2$, then
$s\le q+\spo(G_{n-1,q-1})=q+f(n-1,q-1)=f(n,q)$.
\end{proof}

%
%


\begin{theorem}\label{thm:alpha2}
If $G$ is an $n$-vertex graph with $\alpha(G)\le2$, and $q$ is the maximum size
of a matching in $\Gb$, then $\spo(G)=\Sigma(G)=\CH{n-q+1}2+\CH{q+1}2$.  Thus
there is an cubic-time algorithm to determine $\spo(G)$ and $\Sigma(G)$ in
the class of graphs with independence number at most $2$.
\end{theorem}
\begin{proof}
Let $M$ be a maximum matching in $\Gb$.  Let $H$ be the supergraph of $G$
with vertex set $V(G)$ such that $M$ is the set of edges in $\Hb$.
The graph $H$ is a complete multipartite graph with $q$ parts of size $2$.
By Lemma~\ref{alpha2Sig}, Theorem~\ref{sharpbds}, Observation~\ref{monoton},
and Lemma~\ref{lem:multipart2},
\[
\CH{n-q+1}2+\CH{q+1}2=\Sigma(G)\le \spo(G)\le \spo(H)=\CH{n-q+1}2+\CH{q+1}2.
\]

By examining all triples, it can be tested whether $\alpha(G)\le2$.
There is an algorithm to find the maximum size of a matching in $\Gb$
that runs in time $O(n^{2.5})$ (\cite{MV}; see~\cite{Vaz} for a proof).
Thus $\Sigma(G)$ and $\spo(G)$ can be computed in cubic time when
$\alpha(G)\le2$.
\end{proof}


Using Theorem~\ref{thm:alpha2} and the the formula in Theorem~\ref{thm:split1}
for $\spo(\csg{r}{s})$, which we will prove in Section~\ref{sec:treelower},
we can determine whether some graphs achieve the weaker lower bound in
Corollary~\ref{n2alph}.

\begin{theorem}
If $G$ is an $n$-vertex complete multipartite graph that is regular, then
$\spo(G)=\ntalph$ if and only if $\chi(G)=1$ or $\alpha(G)\le 2$.
\end{theorem}
\begin{proof}
Let $t=\chi(G)$ and $r=\alpha(G)$; note that $G$ is a complete $t$-partite
graph in which all parts have size $r$, and $n=rt$.  Corollary~\ref{n2alph}
yields the desired lower bound $\spo(G)\ge \FR{rt}2(1+t)$, which we write as
$r\CH{t+1}2$ (Lister guarantees this much by Observation~\ref{disjoint}, using
a covering of $V(G)$ by $r$ disjoint $t$-cliques).  When $r$ or $t$ equals $1$,
the graph is complete or empty, and we have seen that equality holds in these
cases.  When $r=2$, the graph is $G_{n,n/2}$ of Theorem~\ref{thm:alpha2},
with $t=q=n/2$, which yields $\spo(G)=2\CH{t+1}2$.

When $r>2$ and $t>1$, a strategy for Lister establishes a stronger lower bound.
In each of the first $t-1$ rounds, Lister marks $r-1$ vertices from each part
having no colored vertices.  By Observation~\ref{simple}, Painter responds
optimally by coloring all the marked vertices in one part, reducing it to one
uncolored vertex.  The remaining marked parts still have no colored vertices.
After $t-1$ rounds, Lister has scored $\SE i2t i(r-1)$, which equals
$(r-1)\CH{t+1}2-(r-1)$, and the uncolored graph is $\Kb_r\gjoin K_{t-1}$.
In Theorem~\ref{thm:split} we will compute $\spo(\Kb_r\gjoin K_{t-1})$;
the formula yields $\spo(\Kb_r\gjoin K_{t-1})\ge r+\CH t2+(t-1)(\sqrt{2r}-1)$.
Summing the contributions from the first $t-1$ rounds and the remaining graph
$\Kb_r\gjoin K_{t-1}$ yields $\spo(G)\ge r\CH{t+1}2+(t-1)(\sqrt{2r}-2)$.  This
lower bound exceeds $r\CH{t+1}2$ when $r>2$ and $t\ge2$.
\end{proof}

%

\section{All graphs such that $\spo(G)=\chisp(G)$}\label{sec:strict}
Here we prove Theorem~\ref{chisp}: Equality holds in $\spo(G)\le \chisp(G)$ if
and only if every component of $G$ is complete.  Sufficiency is immediate,
since $\spo(K_n)=\chisp(K_n)=\CH{n+1}2$.  For necessity, we show that equality
holds only when $\spo(G)=\C{V(G)}+\C{E(G)}$ and that this latter equality holds
only when every component is complete.

As noted in~\cite{CMPW}, $\C{V(G)}+\C{E(G)}$ is an easy upper bound on
$\chisp(G)$, proved by Painter playing greedily with respect to some vertex
ordering.  A graph is {\it sp-greedy}~\cite{CMPW} when equality holds in that
bound.  Our first step is to show that $\spo(G)=\chisp(G)$ only when $G$ is
sp-greedy.

In the $f$-painting game, Painter must immediately color any vertex with
no remaining tokens.  Therefore, when Lister marks a vertex $v$ having exactly
one token, Lister should also mark all neighbors of $v$.  Zhu formalized this
observation.  Let $N(v)$ denote the set of neighbors of $v$.

\begin{proposition}[Zhu~\cite{Z1}]\label{prop:simple}
If $f(v)=1$ for a token assignment $f$ on a graph $G$, then $G$ is
$f$-paintable if and only if $G-v$ is $f'$-paintable, where
\[ f'(w) =
\begin{cases}
  f(w)-1, &\textrm{if $w \in N(v)$,} \\
  f(w), &\textrm{otherwise.}
\end{cases}
\]
\end{proposition}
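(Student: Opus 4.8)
The plan is to prove both implications by strategy conversion, exploiting the feature highlighted in the text: since $f(v)=1$, Painter can be forced to color $v$, and combining this with the marking of all of $N(v)$ consumes exactly one token at each neighbor of $v$ --- which is precisely the amount by which $f$ exceeds $f'$.

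For the forward implication, suppose $G$ is $f$-paintable and fix a winning Painter strategy $\sigma$ for the $f$-painting game on $G$. Confront $\sigma$ with the opening move in which Lister marks $N(v)\cup\{v\}$ (this is legal: the set is nonempty, and $f$-paintability forces $f(w)\ge1$ for every $w\in N(v)$, else Lister wins at once by marking $\{w\}$). I claim $\sigma$ must answer by coloring exactly $\{v\}$: its answer is an independent subset of $N(v)\cup\{v\}$, and if it omits $v$ then $v$ has no tokens left and is still uncolored, so Lister wins by marking $v$ a second time, contradicting that $\sigma$ is winning; hence the answer contains $v$, so, being independent, it avoids $N(v)$ and equals $\{v\}$. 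After this round the remaining position is the $f$-painting game on $G-v$ in which each $w\in N(v)$ holds $f(w)-1=f'(w)$ tokens and every other vertex holds $f(w)=f'(w)$ tokens; that is, it is the $f'$-painting game on $G-v$, and the continuation of $\sigma$ from this point wins it. So $G-v$ is $f'$-paintable.

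For the reverse implication, suppose $G-v$ is $f'$-paintable with winning Painter strategy $\sigma'$, and let Painter play the $f$-painting game on $G$ while privately running a shadow copy of the $f'$-painting game on $G-v$. Whenever Lister's marked set $M$ misses $v$, Painter feeds $M$ to the shadow game and copies $\sigma'$'s reply. On the one round where $v\in M$ (only one, since $f(v)=1$), Painter feeds the shadow game the set $M\setminus(\{v\}\cup N(v))$ (or nothing, if that set is empty) and, in the real game, colors $v$ together with $\sigma'$'s reply; removing $N(v)$ guarantees the reply avoids $N(v)$, so $\{v\}$ can be adjoined without destroying independence in $G$. Copying $\sigma'$'s colorings ensures all of $V(G-v)$ gets colored, and $v$ is colored the first time it is marked, so Painter colors all of $G$.

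The routine but delicate part of both arguments is the token count, where the only real hazard is an off-by-one at the vertices of $N(v)$, and the two simulations are arranged precisely to absorb it: in the forward direction, each $w\in N(v)$ is marked once in the forced opening round and then exactly as often as in the $f'$-game, so after that round it is marked at most $f'(w)$ more times; in the reverse direction, each $w\in N(v)$ is marked in the real game at most once more than in the shadow game --- the extra instance being the special round --- so before it is colored its real tally is at most $f'(w)+1=f(w)$. Vertices outside $N(v)\cup\{v\}$ are marked identically in the two games either way, and $v$ is handled by hand. One may further assume throughout that Lister marks only uncolored vertices and that play terminates (at most $\sum_{w}f(w)$ rounds occur), which is standard.
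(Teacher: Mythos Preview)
The paper does not prove this proposition; it attributes the result to Zhu and offers only the one-sentence intuition preceding the statement (``when Lister marks a vertex $v$ having exactly one token, Lister should also mark all neighbors of $v$''), which amounts to a sketch of the forward direction. Your proof is correct and supplies full details in both directions via strategy simulation, exactly in the spirit of that remark; there is nothing further in the paper to compare against.
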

\begin{lemma}\label{lem:earlycolor}
Given $\spo(G)=\chisp(G)$, let $f$ be an assignment of $\chisp(G)$ tokens under
which $G$ is $f$-paintable.  Let Lister play optimally in the slow-coloring
game (guaranteeing to score at least $\spo(G)$).  If Painter interprets Lister
as playing in the $f$-painting game and responds using an optimal strategy
there, then the set colored by Painter always consists of vertices that began
the round with one token.
\end{lemma}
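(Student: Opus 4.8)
The plan is a squeezing argument: a two-way count of Lister's total score, pinched between $\spo(G)$ and $\chisp(G)$, which are equal by hypothesis. First I would fix notation for a play in which Lister plays an optimal slow-coloring strategy and Painter responds via a winning strategy for the $f$-painting game (such a strategy exists and is ``optimal'' there since $G$ is $f$-paintable). Let $M_i$ be the set marked in round $i$, and for a vertex $v$ let $m_v$ be the number of rounds in which $v$ is marked during the play. Because Painter's strategy is winning, the play terminates with every vertex colored, and no vertex is ever marked more times than it was allotted tokens; that is, $m_v \le f(v)$ for every $v$. (Recall that marking $v$ consumes a token even in the round in which $v$ is colored, so $m_v \le f(v)$ is precisely Painter's winning condition.)

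Next I would compute the total score $S$ in two ways. Summing over rounds gives $S = \sum_i |M_i|$, and reindexing by vertex gives $S = \sum_{v \in V(G)} m_v$. Hence $S \le \sum_{v\in V(G)} f(v) = \chisp(G)$. On the other hand, since Lister plays optimally in the slow-coloring game, $S \ge \spo(G)$. Using the hypothesis $\spo(G) = \chisp(G)$, the chain $\spo(G) \le \sum_v m_v \le \sum_v f(v) = \chisp(G) = \spo(G)$ collapses, forcing $m_v = f(v)$ for every vertex $v$.

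To finish, observe that each vertex $v$ is colored in the last round in which it is marked (it cannot be marked after removal), and this round is its $m_v$-th marking, i.e., its $f(v)$-th. So before the round in which $v$ is colored, $v$ has already been marked $f(v) - 1$ times, leaving it with exactly $f(v) - (f(v)-1) = 1$ token at the start of that round. Thus every vertex that Painter colors in a given round began that round with one token, as claimed. The argument has no substantial obstacle; the only points needing care are the token-consumption convention, which makes ``$m_v \le f(v)$'' equivalent to Painter winning, and the observation that a winning $f$-painting strategy guarantees termination with a full coloring, so that the two-way count of $S$ is legitimate.
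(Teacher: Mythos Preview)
Your proof is correct and follows essentially the same argument as the paper's: both define the per-vertex marking count (the paper calls it $g(v)$, you call it $m_v$), sandwich $\sum m_v$ between $\spo(G)$ and $\chisp(G)$ to force $m_v=f(v)$ for every $v$, and then observe that a vertex colored on its $f(v)$th marking has exactly one token at the start of that round. If anything, your write-up is slightly more careful in spelling out why the inequality $S\ge\spo(G)$ holds and why the game terminates.
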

\begin{proof}
Under the given strategies, let $g(v)$ be the number of times that a vertex
$v$ is marked.  By Lister's strategy, $\spo(G)=\sum_{v\in G}g(v)$.  On the
other hand, $\spo(G)=\chisp(G)=\sum_{v\in G}f(v)$.  Since Painter wins,
$g(v)\le f(v)$ for all $v$, so $g(v) = f(v)$ for all $v$. Since $v$ has
$f(v)-g(v)+1$ tokens at the beginning of the round in which it is colored, the
claim follows.
\end{proof}

\begin{lemma}\label{lem:deletegreedy}
Let $G$ be a graph, and let $f$ be an assignment of $\chisp(G)$ tokens such
that $G$ is $f$-paintable.  If $G-v$ is sp-greedy for some vertex $v$ such that
$f(v)=1$, then $G$ is sp-greedy.
\end{lemma}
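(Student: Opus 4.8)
The plan is to combine Zhu's reduction (Proposition~\ref{prop:simple}) with a single token count. Since $f(v)=1$, Proposition~\ref{prop:simple} tells us that $G-v$ is $f'$-paintable, where $f'$ agrees with $f$ except that $f'(w)=f(w)-1$ for each neighbor $w$ of $v$. Because $\chisp(G-v)$ is the minimum total token count over all paintable assignments on $G-v$, and every neighbor of $v$ lies in $V(G-v)$ while $f$ assigns $\chisp(G)$ tokens in total, we get
\[
\chisp(G-v)\le\sum_{w\in V(G-v)}f'(w)=\Big(\sum_{w\in V(G)}f(w)-f(v)\Big)-\deg v=\chisp(G)-1-\deg v.
\]

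Next I would invoke the hypothesis that $G-v$ is sp-greedy, i.e.\ $\chisp(G-v)=\C{V(G-v)}+\C{E(G-v)}$. Since deleting $v$ removes one vertex and $\deg v$ edges, this quantity equals $(\C{V(G)}-1)+(\C{E(G)}-\deg v)$. Substituting into the displayed inequality and cancelling the common $-1-\deg v$ on both sides yields $\C{V(G)}+\C{E(G)}\le\chisp(G)$.

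Finally, I would invoke the easy upper bound $\chisp(G)\le\C{V(G)}+\C{E(G)}$ (Painter plays greedily along a fixed vertex ordering, as noted in~\cite{CMPW}). The two inequalities force $\chisp(G)=\C{V(G)}+\C{E(G)}$, which is exactly the statement that $G$ is sp-greedy.

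There is no real obstacle here: the argument is a one-shot token count. The only points requiring a moment's care are the bookkeeping in the first display — confirming that all of $v$'s neighbors survive in $G-v$, so that exactly $\deg v$ tokens are shaved off — and recalling that the hypothesis $f(v)=1$ is precisely what licenses the use of Proposition~\ref{prop:simple}.
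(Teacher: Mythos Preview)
Your proof is correct and essentially identical to the paper's: both apply Proposition~\ref{prop:simple} to obtain a paintable $f'$ on $G-v$, compare $\sum f'(w)=\chisp(G)-1-d(v)$ against $\chisp(G-v)=\C{V(G-v)}+\C{E(G-v)}$, and then invoke the greedy upper bound $\chisp(G)\le\C{V(G)}+\C{E(G)}$ to force equality. The only cosmetic difference is that the paper leaves the final appeal to the upper bound implicit, whereas you spell it out.
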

\begin{proof}
By Proposition~\ref{prop:simple}, the graph $G-v$ is $f'$-paintable, where 
$f'(w) = f(w) - 1$ for $w \in N(v)$ and $f'(w) = f(w)$ otherwise.  Since also
$G-v$ is sp-greedy,
\[
\sum_{w\in V(G)}f(w)-f(v)-d(v)~=
\sum_{w\in V(G-v)} f'(w)~\ge~ \C{V(G)}-1+\C{E(G)}-d(v).
\]
Thus $\sum_{w\in V(G)}f(w)\ge \C{V(G)}+\C{E(G)}+f(v)-1$.  Since $f(v)=1$,
$G$ is sp-greedy.
\end{proof}

Let $N[v]$ denote the {\it closed neighborhood} of a vertex $v$, meaning
$N[v]=N(v)\cup\{v\}$.

\begin{lemma}\label{lem:nonspgreedy}
If $G$ is not sp-greedy, then $\spo(G)<\chisp(G)$.
\end{lemma}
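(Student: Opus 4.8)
The plan is to prove the contrapositive by showing that if $\spo(G)=\chisp(G)$, then $G$ is sp-greedy, and combine this with the earlier observation that $\spo(G)=\chisp(G)$ forces $\spo(G)=\C{V(G)}+\C{E(G)}$; the remaining lemmas in this section will then pin down which graphs satisfy that equation. Concretely, I would fix an assignment $f$ of $\chisp(G)$ tokens under which $G$ is $f$-paintable, have Lister play an optimal slow-coloring strategy, and have Painter interpret this as a play of the $f$-painting game, responding optimally there. By Lemma~\ref{lem:earlycolor}, on every round the set Painter colors consists exactly of vertices that began that round with a single token. The idea is then to reconstruct, from this single play, a vertex whose deletion reduces $G$ to an sp-greedy graph with one fewer token, so that Lemma~\ref{lem:deletegreedy} applies and bootstraps sp-greediness back up to $G$.

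First I would handle the base case: if $\C{V(G)}=1$ (or more generally if $G$ is edgeless), $G$ is trivially sp-greedy, so assume $\C{V(G)}\ge 2$ and induct on $\C{V(G)}$. In the play described above, consider the first round. By Lemma~\ref{lem:earlycolor}, every vertex Painter colors on round~1 had exactly one token to start, i.e. $f(v)=1$ for each such $v$. I would pick one such vertex $v$ and invoke Proposition~\ref{prop:simple}: $G-v$ is $f'$-paintable, where $f'$ decreases $f$ by $1$ on $N(v)$. Then I want to argue $G-v$ still satisfies $\spo(G-v)=\chisp(G-v)$, so that the induction hypothesis gives that $G-v$ is sp-greedy, and Lemma~\ref{lem:deletegreedy} finishes. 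The key identity to establish is $\chisp(G-v)=\chisp(G)-1-d(v)+d(v)=\chisp(G)-1$ wait --- more carefully, $\chisp(G-v)\le \sum_{w}f'(w)=\chisp(G)-1-d(v)$ is false in general; rather I should show that the induced slow-coloring play on $G-v$ (Lister's optimal strategy after Painter's forced first response, restricted to the remaining graph) certifies $\spo(G-v)$ is as large as $\chisp(G-v)$. Since $\spo(G-v)\le\chisp(G-v)$ always, equality will follow once I show $\spo(G-v)\ge\chisp(G-v)$.

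The cleanest route is probably: since $g(v)=f(v)$ for all $v$ in the play (from Lemma~\ref{lem:earlycolor}'s proof), the total slow-coloring score equals $\sum_v f(v)=\chisp(G)$, and removing round~1's colored set subtracts exactly $\C{V(G)}$-many-marked... hmm, actually the honest plan is to observe that after Painter's forced first move, the remaining game is a slow-coloring game on $G-I$ for some independent set $I$ with $v\in I$, and one can restrict to $G-v$; tracking tokens via Proposition~\ref{prop:simple} iteratively along the whole play shows $\chisp$ drops by exactly the right amount each round, with equality throughout precisely because $g=f$. I expect the \textbf{main obstacle} to be this bookkeeping: showing that the equality $\spo=\chisp$ is inherited by $G-v$ (or by the relevant smaller graph) rather than degrading to a strict inequality. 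One has to use that Painter's optimal $f$-painting response is also optimal within the slow-coloring game under these specific strategies, and that Lister loses nothing by the restriction to $G-v$; making this transition rigorous — rather than just the token-counting — is the delicate point. Once that is in hand, Lemma~\ref{lem:deletegreedy} with $f(v)=1$ immediately upgrades sp-greediness of $G-v$ to $G$, completing the induction and hence the contrapositive.
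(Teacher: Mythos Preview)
Your overall architecture matches the paper's: work by induction/minimal counterexample on the contrapositive, use Lemma~\ref{lem:earlycolor} to locate a vertex $v$ in Lister's optimal first move $M$ with $f(v)=1$, and invoke Lemma~\ref{lem:deletegreedy} to lift sp-greediness from $G-v$ to $G$. You also correctly flag the crux: one must show that the equality $\spo=\chisp$ (or rather, the relevant inequality) passes to $G-v$.

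The gap is in how you propose to cross that bridge. Your ``token tracking via Proposition~\ref{prop:simple} iteratively'' idea does not obviously deliver $\spo(G-v)=\chisp(G-v)$. Token counting gives you $\chisp(G-v)\le\chisp(G)-1-d(v)$, but to close the loop you need the reverse comparison $\spo(G-v)\ge\chisp(G)-1-d(v)$, and nothing in your sketch produces that; the claim ``Lister loses nothing by the restriction to $G-v$'' is exactly what must be proved and does not follow from $g=f$ alone. (Tracking tokens over the entire colored set $I$ of round~1 does yield $\spo(G-I)=\chisp(G-I)$, but then iterating Lemma~\ref{lem:deletegreedy} across the vertices of $I$ requires that the intermediate reduced assignments remain optimal for $\chisp$, which you have not established.)

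The paper supplies the missing move with a concrete alternative Painter strategy, and this is the idea you are lacking. Rather than have Painter follow the $f$-painting strategy, let $G'=G-v$, set $M'=M\setminus N[v]$, and let $I'$ be Painter's optimal slow-coloring response to $M'$ on $G'$. Painter colors $I'\cup\{v\}$ in response to $M$ and thereafter plays optimally on $G'$. Because $M'$ need not be Lister-optimal on $G'$, one gets $|M'|+\spo(G'-I')\le\spo(G')$, and hence the total score is at most $|M\cap N[v]|+\spo(G')\le (d(v)+1)+\spo(G')$. Combined with $\chisp(G')\le\chisp(G)-d(v)-1$ (Proposition~\ref{prop:simple}), this is precisely the inequality needed. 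The paper then finishes by the contrapositive of Lemma~\ref{lem:deletegreedy}: since $G$ is not sp-greedy and $f(v)=1$, $G'$ is not sp-greedy, so by minimality $\spo(G')<\chisp(G')$, forcing $\spo(G)<\chisp(G)$, a contradiction. Equivalently, in your direct-induction phrasing, the displayed inequality yields $\spo(G')\ge\chisp(G')$, hence equality, and the induction closes.
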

\begin{proof}
Let $G$ be a counterexample with fewest vertices, so $G$ is not sp-greedy, but
$\spo(G)=\chisp(G)$.  Let $f$ be an assignment of $\chisp(G)$ tokens such that
$G$ is $f$-paintable.  Let Lister play an optimal strategy in the slow-coloring
game on $G$.  Since $\spo(G)=\chisp(G)$, an optimal strategy for Painter would
be to follow an optimal strategy $\cS$ in the $f$-painting game on $G$.

Let $M$ be the set marked by Lister on the first move.  Since $\cS$ would
be optimal for Painter, by Lemma~\ref{lem:earlycolor} there exists $v \in M$
such that $f(v) = 1$.  Let $G'=G-v$, and let $M'=M-N[v]$.  Let $I'$ be
Painter's response to $M'$ in an optimal strategy $\cS'$ for the slow-coloring
game on $G'$.  The set $I'\cup\{v\}$ is independent.  Instead of using $\cS$,
Painter responds to $M$ on $G$ by coloring $I'\cup\{v\}$ and then continues
play according to $\cS'$.

By Lemma~\ref{lem:deletegreedy}, $G'$ is not sp-greedy.  Since $G$ is a minimal
counterexample, $\spo(G')<\chisp(G')$.  On the other hand,
Proposition~\ref{prop:simple} implies $\chisp(G')\le\chisp(G)-(d(v)+1)$.
Thus $d(v)+1+\spo(G')<\chisp(G)$.

In the game on $G$, the total scored by Lister is at most $1+d(v)+\spo(G')$,
since $\spo(G')$ counts everything scored in the game except $M-M'$ in the
first round.  Since $d(v)+1+\spo(G')<\chisp(G)$, this contradicts the
hypothesis that Lister can score at least $\chisp(G)$.
\end{proof}

\begin{corollary}
If $\spo(G) = \chisp(G)$, then $\spo(G) = \sizeof{V(G)} + \sizeof{E(G)}$.
\end{corollary}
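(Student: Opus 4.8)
The plan is to combine Lemma~\ref{lem:nonspgreedy} with the known characterization of sp-greedy graphs. First I would observe that the corollary is essentially the contrapositive packaging of Lemma~\ref{lem:nonspgreedy}: that lemma says that a non-sp-greedy graph has $\spo(G)<\chisp(G)$, so if $\spo(G)=\chisp(G)$, then $G$ must be sp-greedy, which by definition means $\chisp(G)=\C{V(G)}+\C{E(G)}$. Since $\spo(G)=\chisp(G)$, we immediately get $\spo(G)=\C{V(G)}+\C{E(G)}$, as desired.

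So the proof is a one-line deduction, and the only real work was already done in Lemma~\ref{lem:nonspgreedy}. I would write it out as follows: assuming $\spo(G)=\chisp(G)$, Lemma~\ref{lem:nonspgreedy} (in contrapositive form) forces $G$ to be sp-greedy, and then the definition of sp-greedy gives $\chisp(G)=\C{V(G)}+\C{E(G)}$; substituting $\spo(G)=\chisp(G)$ completes the argument. There is no genuine obstacle here — the corollary is a bookkeeping step that isolates the conclusion needed for the next stage of the proof of Theorem~\ref{chisp}, where one will still need to show that sp-greedy graphs (or graphs satisfying this vertex-plus-edge equality) have all components complete.

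\begin{proof}
Suppose $\spo(G)=\chisp(G)$. By the contrapositive of Lemma~\ref{lem:nonspgreedy}, the graph $G$ is sp-greedy, which by definition means $\chisp(G)=\sizeof{V(G)}+\sizeof{E(G)}$. Combining this with $\spo(G)=\chisp(G)$ yields $\spo(G)=\sizeof{V(G)}+\sizeof{E(G)}$.
\end{proof}
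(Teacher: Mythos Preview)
Your proof is correct and matches the paper's approach: the paper states the corollary without proof, treating it as an immediate consequence of Lemma~\ref{lem:nonspgreedy} and the definition of sp-greedy, exactly as you have written out.
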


The following theorem completes the proof of Theorem~\ref{chisp}.

\begin{theorem}  
$\spo(G)=\C{V(G)}+\C{E(G)}$ if and only if every component of $G$ is complete.
\end{theorem}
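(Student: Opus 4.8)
The plan is to prove the two directions separately. The \emph{if} direction is easy: if every component of $G$ is complete, say with component sizes $n_1,\dots,n_c$, then $\C{V(G)}+\C{E(G)} = \sum_i \bigl(n_i + \binom{n_i}{2}\bigr) = \sum_i \binom{n_i+1}{2}$; since $\spo(K_{n_i})=\binom{n_i+1}{2}$ (as noted in the excerpt) and slow-coloring cost is additive over components by Observation~\ref{disjoint} together with the trivial matching upper bound $\spo(G)\le\chisp(G)\le\C{V(G)}+\C{E(G)}$, we get $\spo(G)=\sum_i\binom{n_i+1}{2}=\C{V(G)}+\C{E(G)}$.

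For the \emph{only if} direction, I would argue the contrapositive: if some component of $G$ is not complete, then $\spo(G)<\C{V(G)}+\C{E(G)}$. By Observation~\ref{disjoint} and the \emph{if} direction applied to the complete components, it suffices to treat a single connected non-complete graph $G$ and show the strict inequality there. So assume $G$ is connected and not complete; then $G$ has two nonadjacent vertices, and since $G$ is connected we can find two nonadjacent vertices $x,y$ having a common neighbor (take a shortest path between any nonadjacent pair; its endpoints-at-distance-2 subpath gives such a triple). The idea is to exhibit a Painter strategy that saves at least one point compared to the greedy bound. Painter's plan: on the first round, if Lister's marked set $M$ contains both $x$ and $y$, color $\{x,y\}$ (an independent set) together, ``using up'' the first round to remove two vertices; thereafter play greedily with respect to a vertex ordering on $G-\{x,y\}$. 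To make this precise I would set up an induction on $\C{V(G)}$ with the sharpened claim: for a connected non-complete graph, $\spo(G)\le \C{V(G)}+\C{E(G)}-1$.

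The induction step requires handling all of Lister's first moves. I would again use Observation~\ref{simple} to assume $M$ induces a connected subgraph. The key case analysis: (a) if $M$ is a single vertex $v$ that is a simplicial vertex lying in a proper subset, Painter colors $v$; one checks that $G-v$ has at least one non-complete component unless $v$'s removal ``completes'' things, and track the deficit carefully — the edges at $v$ are exactly $d(v)$, matching the drop in $\C{E(G)}$; (b) if $M$ is large enough to contain an independent set of size $\ge 2$, Painter colors a maximum independent set $I$ of $G[M]$, pays $\C M$, and we need $\C M \le \C I + (\text{edges incident to } I) + (\text{savings already banked})$ — here coloring $\ge 2$ vertices for a cost that counts each at most once creates slack; (c) the delicate case is when every marked set Lister ever presents induces a clique, i.e., Lister is effectively playing inside cliques — but since $G$ is connected and non-complete, Lister cannot force a full clique cover behavior, and we exploit the nonadjacent pair $x,y$ to force a round where Painter gains. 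The main obstacle I anticipate is case (c): showing that Lister cannot avoid ever marking a non-clique, or more precisely that \emph{against Painter's strategy} some round must allow Painter to color two vertices at the cost of one marked set — this is exactly where the structural fact ``non-complete connected graph has an induced $P_3$'' must be leveraged, and it is the crux distinguishing this bound from the generic $\spo(G)\le\chisp(G)$ inequality. I would resolve it by having Painter commit to the ordering/strategy that always colors a maximum (not just maximal) independent set, and by a potential-function bookkeeping argument showing the total cost telescopes to at most $\C{V(G)}+\C{E(G)}-1$.
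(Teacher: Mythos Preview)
Your ``if'' direction is fine. The genuine gap is in your case~(c). The claim that ``some round must allow Painter to color two vertices at the cost of one marked set'' is simply false: Lister is free to mark a single edge (or even a single vertex) on every round, so that every marked set is a clique and Painter always colors exactly one vertex. A potential-function argument based on coloring a maximum independent set cannot, by itself, distinguish a connected non-complete graph from a complete one against such a Lister. Your proposed induction also has a structural problem: after Painter removes $I$, the graph $G-I$ may be complete or disconnected, so the sharpened hypothesis $\spo\le\C V+\C E-1$ need not apply to it.

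The missing idea is exactly what the paper uses for the case $M\subsetneq V(G)$. Connectivity of $G$ yields a vertex $w\in M$ with a neighbor $v\notin M$. Painter colors $w$ together with an optimal response on $G-w$ to the residual marked set $M-N[w]$, and then one applies only the \emph{weak} bound $\spo(G-w)\le\C{V(G-w)}+\C{E(G-w)}$ to the remainder. The edge $vw$ is deleted from the graph but, since $v\notin M$, it is never charged to the first round; this is where the ``$-1$'' comes from. So the savings here comes from an unmarked neighbor of a colored vertex, not from coloring two vertices at once. Your induced-$P_3$ idea (a vertex with two nonadjacent neighbors) is needed only in the complementary case $M=V(G)$, where the common neighbor supplies the extra edge in the same way.
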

\begin{proof}
It suffices to show $\spo(G) < \sizeof{V(G)} + \sizeof{E(G)}$ when $G$ is
connected and not complete.  Let $M$ be the set marked by Lister on an optimal
first move in the slow-coloring game on $G$.

\caze{1}{$M = V(G)$.} Since $G$ is connected and not complete, $G$ has 
a vertex $v$ with nonadjacent neighbors $w$ and $w'$.  Let $G'=G-\{w,w'\}$,
and let $M'= V(G)-(N[w]\cup N[w'])$; note that $M'\subsetneq V(G')$.
Let $I'$ be Painter's response to $M'$ in an optimal strategy $\cS'$ on $G'$.

The set $I'\cup\{w,w'\}$ is independent.  In response to $M$ on $G$, Painter
colors $I'\cup\{w,w'\}$ and continues play according to $\cS'$.  In the game
on $G$, the total scored by Lister is at most $2+\C{N(w)\cup N(w')}+\spo(G')$,
since $\spo(G')$ counts everything scored in the game except
$\C{N[w]\cup N[w']}$ on the first round.  Since
$\spo(G')\le\C{V(G')}+\C{E(G')}$,
\[
\spo(G)\le 2+\C{N(w)\cup N(w')}+\C{V(G')}+\C{E(G')}
= \C{V(G)}+\C{E(G')}+\C{N(w) \cup N(w')}.
\]
Since $v\in N(w)\cap N(w')$, we have
$\C{E(G')}\le \C{E(G)}-(\C{N(w)\cup N(w')}+1)$.
Hence $\spo(G)\le\C{V(G)}+\C{E(G)}-1$.

\caze{2}{$\nul\ne M \subsetneq V(G)$.} Since $G$ is connected, $G$ has an edge
$vw$ with $w \in M$ and $v \notin M$.  Let $G' = G-w$ and $M' = M-N[w]$.
Let $I'$ be Painter's response to $M'$ in an optimal strategy $\cS'$ on $G'$.
In response to $M$ on $G$, Painter colors $I' \cup \{w\}$ and continues play
according to $\cS'$.  Let $M_0 = N(w) \cap M$.  Adding the part of the score
in the first round that is not counted in the game on $G'$, we have
\[
\spo(G)\le \C{M_0}+1+\spo(G')\le \C{M_0}+1+\C{V(G')}+\C{E(G')}
= \C{V(G)} + (\C{M_0} + \C{E(G')}).
\]
Since $v\notin M$, we have $\C{E(G')}\le \C{E(G)}-\C{M_0}-1$.
Hence $\spo(G)\le\C{V(G)}+\C{E(G)}-1$.
\end{proof}

\section{Bounds for $n$-Vertex Trees}\label{sec:treeupper}\label{sec:treelower}
It is easy for Lister to score $\floor{3n/2}$ on the $n$-vertex path $P_n$.
Lister first marks all $n$ vertices.  Since $\alpha(P_n)=\ceil{n/2}$, Lister
can score $\floor{n/2}$ more by marking all vertices that remain after Painter
deletes an independent set.  Indeed, $\spo(G)\ge 2n-\alpha(G)$ in any graph
by Lister marking all vertices for two rounds.
We thus can prove $\spo(T)\le\spo(P_n)$ for each $n$-vertex tree $T$ by
proving $\spo(T)\le \floor{3n/2}$.  There are several ways to prove this fact;
the efficient phrasing we present here was suggested by Xuding Zhu.

\begin{theorem}\label{treeup}
If $T$ is an $n$-vertex tree, then $\spo(T)\le\FL{3n/2}$.
\end{theorem}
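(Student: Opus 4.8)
The plan is to prove the statement by strong induction on $n$, using the reduction already set up in the text: it suffices to show that for every connected set $M \subseteq V(T)$ there is an independent set $I \subseteq M$ with $3\sizeof{I} + o(T-I) \ge 2\C{M}$. (By Observation~\ref{simple} we may assume $M$ is connected; the disconnected case follows by handling each component of $T[M]$ in turn, so restricting to connected $M$ loses nothing.) The base case $n \le 1$ is trivial, so assume $n \ge 2$ and that the bound holds for all trees with fewer vertices.

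Given the connected set $M$, form $T' = T - E(T[M])$ as in the definition preceding Lemma~\ref{o(T-I)}, so each component of $T'$ contains exactly one vertex of $M$. The key idea is to choose $I$ to be an independent vertex cover of the tree $T[M]$: then Lemma~\ref{o(T-I)} gives $o(T-I) \ge e_I + o_I$, and it remains to pick the independent vertex cover cleverly so that $3\sizeof{I} + e_I + o_I \ge 2\C{M}$. To do this I would take the partition $\{A, B\}$ of $V(T[M])$ where $A$ consists of the vertices of $M$ lying in an even-order component of $T'$ and $B$ consists of those lying in an odd-order component of $T'$. Applying Lemma~\ref{findI} to the tree $T[M]$ with this partition yields an independent vertex cover $I$ of $T[M]$ with $3\C{I} + \C{A \cap I} + \C{B - I} \ge 2\C{M}$. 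By the very definitions of $e_I$ and $o_I$ we have $e_I = \C{A \cap I}$ (components counted by $e_I$ are exactly the even components whose $M$-vertex lies in $I$, i.e.\ in $A \cap I$) and $o_I = \C{B - I}$ (odd components whose $M$-vertex is not in $I$). Hence $3\C{I} + e_I + o_I \ge 2\C{M}$, and combining with Lemma~\ref{o(T-I)} gives $3\sizeof{I} + o(T-I) \ge 2\C{M}$, which is exactly what the inductive reduction required.

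Finally I would assemble the pieces: with this $I$, Painter colors $I$ in response to $M$; writing the components of $T - I$ as $T_1, \dots, T_k$, each has fewer than $n$ vertices (since $I \ne \nul$ because $M \ne \nul$ and $I$ is a vertex cover of a connected graph on $\ge 1$ vertex), so the inductive hypothesis bounds the rest of the game by $\sum_i \floor{3\C{V(T_i)}/2}$. The computation in the text then gives total score at most $\C{M} + \frac{3}{2}(n - \C{I}) - \frac{1}{2}o(T-I) \le \frac{3n}{2}$ by the inequality just established; since $\spo(T)$ is an integer this yields $\spo(T) \le \floor{3n/2}$.

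The main obstacle is purely bookkeeping: making sure the identification $e_I = \C{A \cap I}$, $o_I = \C{B-I}$ is stated correctly relative to whether the relevant component is "even" or "odd" and whether its $M$-vertex is "in $I$" or "not in $I$", and checking that applying Lemma~\ref{findI} to $T[M]$ (rather than to $T$ itself) is legitimate — it is, since $T[M]$ is a tree and the lemma is stated for arbitrary trees. One should also note explicitly that a vertex cover $I$ of $T[M]$ with $M$ connected and $\C M\ge 2$ is nonempty, and that the $\C M=1$ case (where $I=M$, $o(T-I)\ge 0$, and $3\ge 2$) is handled directly; this guarantees the components $T_i$ are strictly smaller so the induction goes through.
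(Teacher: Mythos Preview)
Your proposal is correct and follows essentially the same approach as the paper: define $A$ and $B$ by the parity of the component of $T'$ containing each vertex of $M$, apply Lemma~\ref{findI} to the tree $T[M]$ with that partition to obtain an independent vertex cover $I$, identify $e_I=\C{A\cap I}$ and $o_I=\C{B-I}$, and combine with Lemma~\ref{o(T-I)}. Your extra bookkeeping about the $\C M=1$ case and the nonemptiness of $I$ is careful but not strictly needed, since Lemma~\ref{findI} applied to a one-vertex tree already returns the single vertex (giving $3\ge2$), and for $\C M\ge2$ both partite sets of the connected bipartite graph $T[M]$ are nonempty.
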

\begin{proof}
We use induction on $n$; the statement holds by inspection for small $n$.
Let marking $M$ be an optimal first move for Lister.  It suffices to prove
that $M$ contains an independent set $I$ such that $\spo(T-I)\le \FR32 n-|M|$.
By Observation~\ref{simple}, we may assume that $T[M]$ is connected and that
Painter colors $X$ or $Y$, where $T[M]$ is a bipartite graph with parts $X$ and
$Y$.  It thus suffices to show that the average of $\spo(T-X)$ and $\spo(T-Y)$
is at most $\FR32 n-\C M$.

Let $o(H)$ denote the number of components of a graph $H$ having odd order.
Summing the inductive bound over all components of $T-I$ yields
$\spo(T-I)\le\FR32(n-|I|)-\FR12o(T-I)$.  We apply this computation to both
$T-X$ and $T-Y$.

Let $T'=T-E(T[M])$.  Each component of $T'$ contains exactly one vertex of $M$.
Consider a vertex $v\in X$, contained in the component $R$ of $T'$.  If $R$ is
odd, then $R$ is counted in $o(T-Y)$.  If $R$ is even, then $R$ contributes at
least $1$ to $o(T-X)$.  The symmetric statement holds for $v\in Y$.  Thus
$o(T-X)+o(T-Y)\ge |M|$.  We compute
\begin{align*}
\FR12\left(\spo(T-X)+\spo(T-Y)\right)
&\le \FR12\left(\FR32(n-|X|)-\FR12o(T-X)+\FR32(n-|Y|)-\FR12o(T-Y)\right)\\
&\le \FR12\left(3n-\FR32|M|-\FR12|M|\right)=\FR32 n-|M|
\end{align*}

\vspace{-2pc}
\end{proof}
\bigskip

Next we determine $\spo(K_{1,n-1})$ and prove $\spo(T)\ge\spo(K_{1,n-1})$ for
every $n$-vertex tree $T$.  More generally, we compute $\spo(\csg{r}{s})$.
For $k,r\in\NN\cup\{0\}$, let $t_k=\CH{k+1}2$ and $u_r=\max\{k\st t_k \le r\}$.
Note that $u_r = \floor{\frac{-1 + \sqrt{1 + 8r}}{2}}$.
The numbers of the form $\CH{k+1}2$ are the \textit{triangular numbers}.
Before computing $\spo(\csg rs)$, we need a technical lemma about $u_r$.

\begin{lemma}\label{lem:trinum}
$u_{r-u_r}=u_r$ when $r+1$ is a triangular number, and otherwise
$u_{r-u_r}=u_r-1$.
\end{lemma}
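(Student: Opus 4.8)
The plan is to analyze the position of $r$ relative to the triangular numbers and track where $r - u_r$ falls. Recall $u_r = \max\{k : t_k \le r\}$ where $t_k = \binom{k+1}{2}$, so by definition $t_{u_r} \le r < t_{u_r+1}$. Write $k = u_r$ for brevity, and note $t_{k+1} - t_k = k+1$, so we may write $r = t_k + j$ where $0 \le j \le k$. Then $r - u_r = r - k = t_k + j - k = t_{k-1} + j$ (using $t_k - k = t_{k-1}$). So the question reduces to: for which $j$ in $\{0,\dots,k\}$ is $t_{k-1} + j \ge t_k$, i.e. $j \ge t_k - t_{k-1} = k$? That holds only when $j = k$, i.e. when $r = t_k + k = t_{k+1} - 1$, i.e. when $r + 1 = t_{k+1}$ is a triangular number.

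First I would handle the case $r + 1 = t_{k+1}$ (so $j = k$): then $r - u_r = t_{k-1} + k = t_k$, hence $u_{r-u_r} = \max\{\ell : t_\ell \le t_k\} = k = u_r$, giving the first claim. Second, I would handle the case $r + 1$ not triangular, so $0 \le j \le k-1$: then $r - u_r = t_{k-1} + j$ with $0 \le j \le k-1$, so $t_{k-1} \le r - u_r < t_{k-1} + k = t_k$, which gives $u_{r - u_r} = k - 1 = u_r - 1$. I should double-check the boundary cases ($r = 0$ or $r = 1$, where $k = 0$; and small $k$) to make sure the formula $t_k - k = t_{k-1}$ and the ranges behave, but this is routine.

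I do not expect a serious obstacle here; the only subtlety is bookkeeping the off-by-one relationship $t_k - k = t_{k-1}$ and being careful that when $j$ ranges over $\{0,\dots,k\}$ the value $r - u_r = t_{k-1} + j$ stays in $[t_{k-1}, t_k]$ with equality at the right endpoint exactly when $r+1$ is triangular. An alternative, essentially equivalent, route is to verify the two claimed values directly against $u_r = \lfloor(-1+\sqrt{1+8r})/2\rfloor$, but the triangular-number decomposition $r = t_k + j$ is cleaner and avoids floor manipulations, so that is the approach I would present.
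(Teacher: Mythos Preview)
Your proof is correct and follows essentially the same approach as the paper: set $k=u_r$, locate $r$ in the interval $[t_k,t_{k+1})$, and check whether $r-k$ lands at $t_k$ (the case $r+1=t_{k+1}$) or in $[t_{k-1},t_k)$. Your parametrization $r=t_k+j$ and the identity $t_k-k=t_{k-1}$ are a slight repackaging of the same computation the paper performs directly.
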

\begin{proof}
If $u_r=k$, then $t_k\le r<t_{k+1}$.  Also $t_{k+1}-t_k=k+1$.  Thus $r-k=t_k$
if $r+1=t_{k+1}$, yielding $u_{r-u_r}=u_r$.  However, $t_{k-1}\le r-k<t_k$ if
$t_k\le r\le t_{k+1}-2$, yielding $u_{r-u_r}=u_r-1$.
\end{proof}

\begin{theorem}\label{thm:split}
For $r,s\in\NN$,
\[ \spo(\csg{r}{s})=r+{s+1\choose2}+s u_r. \]
\end{theorem}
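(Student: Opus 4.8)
The plan is to prove matching upper and lower bounds by induction, parametrizing by both $r$ and $s$. Recall $\csg{r}{s} = \Kb_r \gjoin K_s$: an independent set $R$ of $r$ vertices, a clique $S$ of $s$ vertices, all edges between them. The independence number is $r$, and any independent set is either a subset of $R$ or a single vertex of $S$. First I would record the trivial base cases: when $s=0$ the graph is $\Kb_r$, and $\spo(\Kb_r)=r$ (Painter deletes everything each round), matching the formula since $\binom{1}{2}=0$; when $r=0$ the graph is $K_s$, and $\spo(K_s)=\binom{s+1}{2}$, again matching since $u_0=0$. The inductive step will analyze Lister's first move $M$ on $\csg{r}{s}$, and by Observation~\ref{simple} we may assume $T[M]$ is connected and Painter colors a maximal independent set.

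For the \textbf{upper bound} (Painter's strategy), the key structural fact is that after Painter colors an independent set, the remaining graph is again of the form $\csg{r'}{s'}$ (if Painter colors some vertices of $R$, then $r$ drops; if Painter colors one vertex of $S$, then $s$ drops by $1$; deleting the colored vertices of $R$ cannot disconnect anything since $S$ remains, and if $S$ becomes empty we are left with $\Kb_{r'}$). So Painter just needs, for each connected marked set $M$, an independent $I\subseteq M$ with $\C M + f(r',s') \le f(r,s)$ where $f(r,s)=r+\binom{s+1}{2}+su_r$ and $(r',s')$ is the reduced pair. Writing $M = (M\cap R)\cup(M\cap S)$ with $|M\cap R| = a$ and $|M\cap S| = b$: if Painter colors all of $M\cap R$ (when $a\ge 1$), the new graph is $\csg{r-a}{s}$ and we need $a+b + f(r-a,s)\le f(r,s)$, i.e. $b \le a + s(u_r - u_{r-a})$; if Painter colors one vertex of $M\cap S$ (when $b\ge 1$), the new graph is $\csg{r}{s-1}$ and we need $a+b+f(r,s-1)\le f(r,s)$, i.e. $a + b - 1 \le s + u_r$, i.e. $a \le r - b + 1 + u_r$ (hmm, more carefully $a+b \le s+1+u_r$; but $a\le r$ and $b\le s$, so this needs checking). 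The delicate regime is when $M$ contains few vertices of $R$ but many of $S$, or vice versa; I would split into cases on whether $a$ is large or small relative to the current triangular-number structure, using Lemma~\ref{lem:trinum} (and its natural generalization relating $u_r$ to $u_{r-a}$) to control the term $s(u_r - u_{r-a})$.

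For the \textbf{lower bound} (Lister's strategy), Lister wants to force the recursion to be tight. The natural strategy: as long as $s \ge 1$, repeatedly mark $M = R' \cup S$ where $R'\subseteq R$ is the current independent part — or more cleverly, mark a carefully sized subset of $R$ together with all of $S$ so that whatever Painter does, Lister gains exactly the amount the formula predicts. When Painter colors a vertex of $S$, $s$ drops; when Painter instead colors part of $R$, Lister has ``spent'' marks on $R$ efficiently. The $su_r$ term strongly suggests the following: Lister spends roughly $u_r$ rounds ``attacking'' $R$ while $S$ has full size $s$ — on each such round marking one additional vertex of $R$ per existing marked-but-uncolored vertex, reminiscent of the star argument for $\spo(K_{1,n-1})$ — forcing Painter to eventually color down $R$, contributing about $s$ per round over $u_r$ rounds, plus the $\binom{s+1}{2}$ from eventually clearing $S$ (one vertex per round) and the final $r$ from clearing the residual independent set. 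I would set this up as an explicit inductive Lister strategy: on the first move mark all of $S$ plus $u_r$ vertices of $R$ (or the right count extracted from Lemma~\ref{lem:trinum}), argue that Painter's best response is either to color all marked $R$-vertices (reducing to $\csg{r-u_r}{s}$, then invoke induction using $u_{r-u_r}$) or to color one $S$-vertex (reducing to $\csg{r}{s-1}$), and check that in either branch $\C M + \spo(\text{new graph}) \ge f(r,s)$ using the inductive hypothesis and Lemma~\ref{lem:trinum}.

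The main obstacle I anticipate is the \emph{floor-function bookkeeping} in both directions: the term $u_r - u_{r-a}$ behaves irregularly (it increases by $1$ exactly when we cross a triangular number), so choosing the right size of the $R$-portion of the marked set — and verifying the inequality $\C M + f(r',s') \le f(r,s)$ (resp.\ $\ge$) in every case — requires a careful case analysis keyed to where $r$ sits between consecutive triangular numbers. Lemma~\ref{lem:trinum} handles the single step $a=u_r$; a mild generalization (tracking $u_{r-a}$ for general $a$, or iterating the lemma) will be the technical heart. The interaction between the two ``directions'' of play (attacking $R$ vs.\ attacking $S$) is what makes this more than a one-parameter induction, but because every reachable position stays in the family $\{\csg{r}{s}\}$, the induction closes cleanly once the arithmetic is pinned down.
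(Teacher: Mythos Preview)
Your plan is correct and close to the paper's proof: both use induction on $r+s$, identify Lister's key move as marking all of $S$ together with $u_r$ vertices of $R$, recognize Painter's two sensible responses as coloring all marked $R$-vertices or one $S$-vertex, and lean on Lemma~\ref{lem:trinum} for the arithmetic. The paper, however, does not split into separate upper and lower bounds; it applies the recursion of Proposition~\ref{pr:recur} directly and observes at the outset that Lister loses nothing by always marking all of $S$ (Painter can delete at most one $S$-vertex regardless, and the residual case $M\subseteq R$ gives only $a+f(r-a,s)=r+\binom{s+1}{2}+su_{r-a}\le f(r,s)$). This collapses your two-parameter $(a,b)$ analysis to the single-parameter function $g(k)=k+s+\min\{f(r-k,s),f(r,s-1)\}$; the paper then checks $g(u_r)=f(r,s)$ and uses a short monotonicity argument ($g$ increases while $f(r-k,s)\ge f(r,s-1)$ and is nonincreasing thereafter) in place of your anticipated case analysis on triangular-number positions to show $k=u_r$ is the maximizer. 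Two small arithmetic slips in your write-up: cancelling $a$ in your first inequality gives $b\le s(u_r-u_{r-a})$, not $b\le a+s(u_r-u_{r-a})$; and $f(r,s)-f(r,s-1)=s+u_r$, so the second should read $a+b\le s+u_r$.
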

\begin{proof}
We use induction on $r+s$.  Let $f(r,s)= r + {s+1\choose2} + s u_r$.  When $r$
or $s$ is $0$, the claim clearly holds.  For $rs>0$, let $G = \csg{r}{s}$.
Also let $[r]=\{1,\dots,r\}$.  Let $R$ and $S$ denote the sets of vertices 
with degree $s$ and degree $r+s-1$, respectively.

If Lister marks no vertex of $S$, then Painter colors all marked vertices.
By the induction hypothesis, this is easily seen to be non-optimal for Lister.
Hence we may assume that Lister marks some vertex of $S$.
Since Painter can color at most one vertex of $S$ in response, Lister should
mark all of $S$ plus perhaps some of $R$.  Painter responds by coloring one
vertex of $S$ or all marked vertices of $R$.  Applying the recurrence of
Proposition~\ref{pr:recur} and the induction hypothesis, 
\[
\spo(\csg{r}{s}) =
\max_{k\in [r]}((k+s)+\min\{\spo(\csg{r-k}{s}),\spo(\csg{r}{s-1})\}
=\max_{k \in [r]}g(k),
\]
where
\[ g(k)=k+s+\min\{f(r-k,s),f(r,s-1)\}. \]

By the induction hypothesis, $g(k)$ is the best result Painter can obtain when
Lister marks $S$ and $k$ vertices of $R$ on the first round.  We compute
\begin{align*}
g(u_r)&= u_r+s+\min\{r-u_r+\CH{s+1}2+su_{r-u_r},r+\CH s2+(s-1)u_r\}\\
&=\min\{r+\CH{s+1}2+s(u_{r-u_r}+1),r+\CH{s+1}2+su_r\}\\
&=\min\{f(r,s)+s(1+u_{r-u_r}-u_r),f(r,s)\}.
\end{align*}
By Lemma~\ref{lem:trinum}, $u_r-u_{r-u_r}\in\{0,1\}$, so $g(u_r)=f(r,s)$.
Furthermore, if Lister marks $u_r$ vertices in $R$ and all of $S$, then
deleting a vertex of $S$ is an optimal response for Painter. 

We seek $\max_k g(k)$.  Note that $g(0)=s+f(r,s-1)$, since $f(r,s)>f(r,s-1)$.
While $f(r-k,s)\ge f(r,s-1)$, we have $g(k)=g(0)+k$, so in this range we
maximize $k$.  Since also $f(r-k,s)$ decreases as $k$ increases, and
$f(r-k,s)+k$ is nonincreasing, subsequently $g$ is nonincreasing.  Hence $g(k)$
is maximized by the largest $k$ such that $f(r-k,s)\ge f(r,s-1)$.  To show this
is $u_r$, it suffices to show $f(r-u_r, s) \geq f(r,s-1)$ and
$f(r-(u_r+1), s) < f(r,s-1)$.

When $r+1$ is not a triangular number, Lemma~\ref{lem:trinum} yields
$f(r-u_r, s) = f(r, s-1)$.  Since $f(r-(u_r+1),s)<f(r-u_r,s)$, the desired
value of $k$ is $u_r$.

When $r+1$ is a triangular number, Lemma~\ref{lem:trinum} yields
$f(r-u_r, s) = f(r, s-1) + s$.  Since $r$ itself then is not a triangular
number, Lemma~\ref{lem:trinum} and $u_{r-1} = u_r$ yield
$f(r-(u_r+1),s)=f(r-1,s-1)<f(r,s-1)$.  Again the desired value is $u_r$.
\end{proof}

Setting $s=1$, we have $\spo(K_{1,n-1})=n+u_{n-1}=n+\FL{\FR{-1+\sqrt{8n-7}}2}$.

\begin{theorem}
If $T$ is an $n$-vertex tree, then $\spo(T) \geq \spo(K_{1,n-1}) = n + v_n$,
where $v_n=u_{n-1}$.
\end{theorem}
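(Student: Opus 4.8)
The plan is to prove $\spo(T) \geq n + v_n$ for every $n$-vertex tree $T$ by exhibiting a strategy for Lister that guarantees this score, proceeding by induction on $n$. The base cases $n \leq 2$ are immediate since $v_1 = v_2 = 0$ and $\spo(K_1) = 1$, $\spo(K_2) = 3$ (the latter via $\spo(K_2) \geq 2\cdot 2 - \alpha(K_2) = 3$, as remarked in Section~\ref{sec:treeupper}). For the inductive step, Lister first marks all of $V(T)$, scoring $n$ points. Painter deletes an independent set $I$; since $T$ is bipartite, $\alpha(T) \geq \lceil n/2 \rceil$, but Painter may delete fewer. Let the components of $T - I$ be $T_1, \ldots, T_k$ with orders $n_1, \ldots, n_k$; then $\sum n_i = n - |I|$, and by the induction hypothesis, from this point on Lister can still guarantee $\sum_{i=1}^k (n_i + v_{n_i}) = (n - |I|) + \sum_i v_{n_i}$. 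So the total guaranteed score is at least $n + (n - |I|) + \sum_i v_{n_i} = 2n - |I| + \sum_i v_{n_i}$, and it suffices to show that for every independent set $I$ that Painter might delete,
\[ 2n - |I| + \sum_{i=1}^k v_{n_i} \;\geq\; n + v_n, \quad\text{i.e.,}\quad n - |I| + \sum_{i=1}^k v_{n_i} \;\geq\; v_n. \]

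The main work is therefore a purely combinatorial inequality about how $v_n = u_{n-1}$ behaves under partitioning the tree. The key facts I expect to need are: (i) $v$ is subadditive in a strong sense tied to triangular numbers — roughly $v_{a+b} \leq v_a + v_b + 1$, with tighter control available — since $v_n \approx \sqrt{2n}$; and (ii) the number of components $k$ and the deleted set $|I|$ are linked, because $I$ is independent in a forest: in fact each component $T_i$ of $T - I$ is attached to the rest of $T$ through edges to $I$, and since $T$ is a tree, collapsing components gives $k \leq |I| + 1$ (the bipartite-like structure forces $|I| \geq k - 1$ when $I \neq \emptyset$, and if $I = \emptyset$ the claim $n \geq v_n$ is trivial). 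Combining $k \leq |I| + 1$ with $\sum n_i = n - |I|$, I want to bound $\sum v_{n_i}$ from below. Since $v$ is concave-like (it grows like $\sqrt{2m}$), $\sum_{i=1}^k v_{n_i}$ is \emph{minimized} for fixed total $n - |I|$ by concentrating mass, so the worst case is one big component of order $n - |I| - (k-1)$ plus $k - 1$ singletons (each contributing $v_1 = 0$), giving $\sum v_{n_i} \geq v_{n - |I| - k + 1} \geq v_{n - 2|I|}$ using $k - 1 \leq |I|$. Then the inequality to verify reduces to $n - |I| + v_{n - 2|I|} \geq v_n$, which should follow from the estimate $v_n - v_{n-2|I|} \leq$ something like $\lceil\sqrt{2n}\rceil - \lceil\sqrt{2(n-2|I|)}\rceil$, comfortably dominated by $n - |I|$ whenever $|I| \geq 1$; the triangular-number identities in Lemma~\ref{lem:trinum} give the precise arithmetic.

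The main obstacle I anticipate is making the step ``$\sum_i v_{n_i}$ is minimized by concentrating mass'' rigorous in the discrete setting: $v$ is not literally concave, but it is close enough that $v_a + v_b \geq v_{a+b-c}$ for an appropriate small $c$ depending only on how many of the parts are small. A clean way around this is to prove the sharper statement $v_a + v_b \geq v_{a + b}$ whenever $a, b \geq 1$ fails (e.g. $v_2 + v_2 = 0 < 1 = v_4$), so instead I would track the deficit: define $\delta_m = m - v_m$, note $\delta_m$ is nondecreasing and $\delta_{a+b} \leq \delta_a + \delta_b$ (superadditivity of $m \mapsto m - \sqrt{2m}$-type quantities), which packages the ``concentration is worst'' phenomenon into a single monotone-superadditive function. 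Then $n - |I| + \sum v_{n_i} = n - |I| + \sum(n_i - \delta_{n_i}) = n - \sum \delta_{n_i} \geq n - \delta_{n - |I|} \geq n - \delta_n + (\text{gain from } |I|)$, and one checks $\delta_n - \delta_{n-|I|} \leq |I|$ trivially since $\delta$ increases by at most $1$ each step — wait, more carefully $\delta_n - \delta_{n - |I|} \leq |I|$ gives exactly $n - |I| + \sum v_{n_i} \geq n - \delta_n = v_n$, closing the induction. So the entire proof hinges on: (a) $\delta_m := m - v_m$ is superadditive, and (b) $\delta$ increases by at most $1$ per unit step — both of which are elementary consequences of Lemma~\ref{lem:trinum} and the definition $v_n = u_{n-1}$. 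I would verify (a) and (b) as two short claims, then assemble the induction as above.
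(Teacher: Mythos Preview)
Your approach has a genuine and fatal gap: the Lister strategy ``mark all of $V(T)$ on the first round'' is simply too weak to guarantee $n+v_n$, and the star itself is the counterexample. If $T=K_{1,n-1}$ and Lister marks all $n$ vertices, Painter deletes the $n-1$ leaves. What remains is a single vertex, so Lister's total score under your strategy is $n+1$. But $\spo(K_{1,n-1})=n+v_n$, and $v_n=u_{n-1}\ge 2$ already for $n\ge4$. In your notation this instance has $|I|=n-1$, $k=1$, $n_1=1$, $\sum v_{n_i}=0$, and the inequality you need, $n-|I|+\sum_i v_{n_i}\ge v_n$, becomes $1\ge v_n$, which is false. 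No refinement of the $\delta$-bookkeeping can rescue this, because the problem is the Lister strategy, not the estimate. (Relatedly, your algebra slips: $n-|I|+\sum v_{n_i}=2(n-|I|)-\sum\delta_{n_i}$, not $n-\sum\delta_{n_i}$; and $\delta$ is not superadditive over all positive integers, e.g.\ $\delta_1+\delta_1=2>1=\delta_2$. Also $v_2=u_1=1$, not $0$.)

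The underlying reason mark-everything fails is that for the star the optimal Lister first move is to mark the center together with only about $u_{n-1}\approx\sqrt{2n}$ leaves (see Theorem~\ref{thm:split}); marking everything hands Painter an enormous independent set for free. The paper therefore uses a completely different Lister strategy: pick a single edge $e$ of $T$, let $T_1,T_2$ be the two components of $T-e$ with orders $n_1,n_2$, and have Lister play optimally on $T_1$ and $T_2$ separately (Observation~\ref{disjoint}). By induction this yields $\spo(T)\ge n+v_{n_1}+v_{n_2}$, so the whole proof reduces to finding an edge with $v_{n_1}+v_{n_2}\ge v_n$. That last inequality is then verified by elementary concavity-type estimates on $g(x)=\tfrac12(-1+\sqrt{8x-7})$, handling small $n$ and the cases $n_1\in\{2,3\}$ separately.
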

\begin{proof}
We use induction on $n$.  Since $\spo(P_n)=\FL{3n/2}$ and always
$\FL{n/2}\ge v_n$, the claim holds when $T$ is a path.  Hence also the claim
holds for $n\le4$.

The main idea is that Lister can play separately on disjoint induced subgraphs,
yielding $\spo(T)\ge \spo(T_1)+\spo(T_2)$ when $T_1$ and $T_2$ are the
components obtained by deleting an edge of $T$.  If $n_i=\C{V(T_i)}$, then
$\spo(T)\ge n+v_{n_1}+v_{n_2}$.  It therefore suffices to find an edge $e$ such
that $v_{n_1} + v_{n_2} \geq v_{n}$.  We may assume $n_1\le n_2$.

When $n\ge5$, we have $v_n\le 1+v_{n-3}$.  If $T$ has an edge whose deletion
leaves a component with two or three vertices, then $v_{n_1}=1$, and
$v_{n_1}+v_{n_2}\ge 1+v_{n-3}\ge v_n$, as desired.  If $T$ is not a star,
then $T$ has an edge not incident to a leaf, and when $n\le7$ every edge not
incident to a leaf has this property that $n_1\le3$.

In the remaining case, $n\ge8$ and $n_1,n_2\ge4$.  When $n\ge8$, we have
$v_n\le 1+v_{n-4}$.  Let $g(x)=\FR{-1+\sqrt{8n-7}}2$; note that $v_n=\FL{g(n)}$.
We need $v_{n_1}+v_{n_2}\ge v_n$.

Let $p=4$ and $q=n-4$.  Since $g$ is concave, $g(p+x)-g(p)\ge g(q)-g(q-x)$
when $0\le x\le n/2-4$, which yields $g(p+x)+g(q-x)\ge g(p)+g(q)$.  If
$a+b\ge c+d$, then $\FL{a}+\FL{b}\ge \FL{a+b}-1\ge\FL{c+d}-1\ge\FL{c}+\FL{d}-1$.
Applying this with $(a,b,c,d)=(g(n_1),g(n_2),g(p),g(q))$, and using $v_4=2$,
we obtain $v_{n_1}+v_{n_2}\ge v_4+v_{n-4}-1=1+v_{n-4}\ge v_n,$
as desired.
\end{proof}

\section{Bounds for Complete Bipartite Graphs}\label{sec:bip}
We prove upper and lower bounds on $\spo(K_{r,s})$ separately, via strategies
for Painter and Lister.  Together, these results yield Theorem~\ref{thm:bip}.
Our general upper bound is fairly good when $r$ is much larger than $s$, but
for $K_{r,r}$ it still differs from the lower bound in the leading coefficient.

%
%
%

\begin{remark}\label{remark}
{\it Properties of optimal strategies.}
In light of symmetry, we can describe a move by Lister in the slow-coloring
game on $K_{r,s}$ as a pair $(j,i)$, marking $j$ vertices in the part $X$
of size $r$ and $i$ in the part $Y$ of size $s$.  An optimal response by
Painter colors all marked vertices in one part, and the cost under optimal play
will be $j+i+\min\{\spo(K_{r-j,s}),\spo(K_{r,s-i})\}$.

Since Lister can restrict play to an induced subgraph, $\spo(K_{r,s})$
increases with $r$ and with $s$ (in fact strictly, by
Observation~\ref{disjoint}).  If an optimal response by Painter for the move $(j,i)$ is to color in
$X$, and $j'>j$, then
$$
\spo(K_{r-j',s})<\spo(K_{r-j,s})\le \spo(K_{r,s-i}),
$$
and hence it is also optimal to color in $X$ in response to $(j',i)$.
Therefore, given $i$, there is a threshold $\hj$ such that in response
to $(j,i)$, Painter should color the $j$ vertices in $X$ when $j\ge \hj$ and
should color the $i$ vertices in $Y$ when $j<\hj$.  As a result, we specify a
Painter strategy by specifying $\hj$ as a function of $i$ when playing on
$K_{r,s}$.
\end{remark}

\begin{theorem}\label{biplower}
$\spo(K_{r,s}) \leq r+s+2\sqrt{rs}$.
\end{theorem}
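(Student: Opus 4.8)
The plan is to exhibit an explicit Painter strategy and bound the total score. Following Remark~\ref{remark}, a Lister move on $K_{r,s}$ is a pair $(j,i)$ with $j$ vertices marked in the part $X$ of size $r$ and $i$ marked in the part $Y$ of size $s$, and a Painter strategy is determined by a threshold function $\hj(i)$: Painter colors in $X$ (deleting those $j$ vertices) when $j\ge\hj(i)$ and colors in $Y$ otherwise. I would choose the threshold to keep a potential of the form $\Phi(r,s)=r+s+2\sqrt{rs}$ ``on budget'': the score from one round plus the potential of the resulting graph should not exceed the potential of the current graph. So the goal is to find $\hj(i)$ such that for every $(j,i)$ with $1\le j\le r$, $0\le i\le s$, not both zero,
\[
j+i+\min\{\Phi(r-j,s),\,\Phi(r,s-i)\}~\le~\Phi(r,s),
\]
and then conclude $\spo(K_{r,s})\le\Phi(r,s)=r+s+2\sqrt{rs}$ by induction on $r+s$, with the trivial base cases $rs=0$ where $\spo=r+s$.

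The heart of the matter is the choice of threshold and checking the two regimes. When Painter colors in $X$ (the case $j\ge\hj(i)$), the increment to verify is $j+i+\Phi(r-j,s)\le\Phi(r,s)$, i.e. $2\sqrt{rs}-2\sqrt{(r-j)s}\ge i$, equivalently $\frac{2js}{\sqrt{rs}+\sqrt{(r-j)s}}\ge i$; this is guaranteed once $j$ is at least roughly $i\sqrt{r/s}$, which suggests taking $\hj(i)\approx i\sqrt{r/s}$ (rounded suitably). When Painter colors in $Y$ (the case $j<\hj(i)$), the requirement is $j+i+\Phi(r,s-i)\le\Phi(r,s)$, i.e. $2\sqrt{rs}-2\sqrt{r(s-i)}\ge j$, equivalently $\frac{2ri}{\sqrt{rs}+\sqrt{r(s-i)}}\ge j$; since $j<\hj(i)$, it suffices that $\hj(i)\le 1+\frac{2ri}{\sqrt{rs}+\sqrt{r(s-i)}}$. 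So I need a single integer-valued threshold $\hj(i)$ sandwiched between $\lceil i\sqrt{r/s}\rceil$-ish on the low side and $1+\frac{2ri}{\sqrt{rs}+\sqrt{r(s-i)}}$ on the high side; concavity of $\sqrt{\cdot}$ makes the gap between these bounds comfortably larger than $1$, so an integer choice exists. I would set $\hj(i)=\lceil i\sqrt{r/s}\rceil$ and verify both inequalities hold with this choice, handling the boundary $i=0$ (where $\hj(0)=0$, so Painter always colors in $X$, and the bound $j\le\Phi(r,s)-\Phi(r-j,s)$ holds since $j\le r-(r-j)\le r+s+2\sqrt{rs}-(r-j)-s-2\sqrt{(r-j)s}$) and $i=s$ separately.

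The main obstacle is the rounding: the clean inequalities above hold for the real-valued surrogate, but $\hj$ must be an integer and $j$ ranges over integers, so the induction must absorb the $\pm 1$ slack. The concavity estimate $\sqrt{r}-\sqrt{r-j}\le \frac{j}{2\sqrt{r-j}}$ type bounds, together with the fact that $2\sqrt{rs}$ grows by at least $2\sqrt{s}/(\sqrt{rs}+\sqrt{(r-1)s})\cdot$ (no, rather by enough) between consecutive integers, should give the needed room; I expect the cleanest route is to verify directly that for $\hj(i)=\lceil i\sqrt{r/s}\rceil$ one has simultaneously $2(\sqrt{rs}-\sqrt{(r-\hj(i))s})\ge i$ and $2(\sqrt{rs}-\sqrt{r(s-i)})\ge \hj(i)-1$, the first by the definition of the ceiling and the second by concavity, and I would push the messy case analysis (small $i$, $i$ near $s$, $\hj(i)$ near $r$) into short separate checks rather than a unified formula.
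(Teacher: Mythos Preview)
Your approach is essentially the paper's: same potential $\Phi(r,s)=r+s+2\sqrt{rs}$, same induction on $r+s$, and the same threshold value $i\sqrt{r/s}$. The one place the paper is slicker is exactly where you locate your ``main obstacle.'' The paper does \emph{not} round the threshold to an integer: it sets $J=i\sqrt{r/s}$ as a real number and observes that since $i+j+f(r-j,s)$ is a decreasing continuous function of $j$ (used for $j\ge J$) while $i+j+f(r,s-i)$ is increasing in $j$ (used for integer $j<J$), both regimes are bounded above by their values at the real number $j=J$. Everything then reduces to the pair of inequalities
\[
J+2\sqrt{r(s-i)}~\le~ i+2\sqrt{(r-J)s}~\le~ 2\sqrt{rs},
\]
each a one-line algebraic check with $J=i\sqrt{r/s}$. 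No rounding, no boundary cases.

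Your integer threshold $\lceil i\sqrt{r/s}\rceil$ would in fact also go through without the case analysis you fear: for the $Y$-regime, $\lceil i\sqrt{r/s}\rceil-1<i\sqrt{r/s}\le 2(\sqrt{rs}-\sqrt{r(s-i)})$ (the second step is the paper's right inequality rearranged), and for the $X$-regime, $\lceil i\sqrt{r/s}\rceil\ge i\sqrt{r/s}$ plugs into the paper's left inequality. So the obstacle you identify is illusory; but allowing a real threshold is the clean way to see that.
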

\begin{proof}
Let $f(r,s)=r+s+2\sqrt{rs}$.  We prove $\spo(K_{r,s}) \leq f(r,s)$ by induction
on $r+s$, with basis $s=0$, where $\spo(K_{r,s}) = r = f(r,0)$.  For $r+s>0$,
without loss of generality we may assume $r\ge s$.  Let $X$ and $Y$ be the
parts of the bipartition, with $\sizeof{X} = r$ and $\sizeof{Y} = s$.

As explained in Remark~\ref{remark}, we specify $\hj$ as a function of $i$
so that in response to Lister's first move $(j,i)$, Painter colors the $j$
marked vertices in the part of size $r$ if $j\ge \hj$ and otherwise colors
the $i$ marked vertices in the part of size $s$.  When $i=0$, the threshold
is $0$.  Painter colors all the marked vertices, and the induction hypothesis
yields the desired bound.  Hence we may assume $i>0$.

Below the threshold, the remaining game is on $K_{r,s-i}$, independent of $j$.
Hence in this case Lister should maximize $j$ to gain the most initial cost,
and $i+\hj+\spo(K_{r,s-i})$ is an upper bound on the total cost.  Above the
threshold, by the induction hypothesis the value $i+j+f(r-j,s)$ is an upper
bound on the total cost, given that Painter's strategy colors the $i$ vertices
in $Y$.  The value is $i+r+s+2\sqrt{(r-j)s}$.  As a continuous function of
$j$, this value is strictly decreasing.  We consider only values at least
$\hj$, so $i+\hj+f(r-\hj,s)$ is an upper bound on the total cost when Lister's
play against Painter's strategy is in this range; this statement does not
require $\hj$ to be an integer.

Using the induction hypothesis, we compute
\[
\begin{array}{r@{}l@{}l}
\spo(K_{r,s}) &\le\max_i\big[i+\hj+\max\{f(r,s-i),f(r-\hj,s)\}\big]\\
&=r+s+\max_i\big[\max\{\hj+2\sqrt{r(s-i)}, i+2\sqrt{(r-\hj)s}\}\big].
\end{array}
\]
It then suffices to prove the following inequalities for $1\le i\le s$ under
an appropriate threshold function $\hj$:
$$
\hj+2\sqrt{r(s-i)}\le i+2\sqrt{(r-\hj)s} \le 2\sqrt{rs}.
$$

Define the threshold function for Painter by $\hj={{i\sqrt{r/s}}}$.
The inequality on the right is equivalent to
$2\ge \FR{i}{\sqrt{rs}-\sqrt{rs-sJ}}=\FR{i(\sqrt{rs}+\sqrt{rs-sJ})}{sJ}$.  Thus
it suffices to have $\FR{2i\sqrt{rs}}{s\hj}\le2$, as is given.

The inequality on the left is $\hj-i\le2(\sqrt{rs-sJ}-\sqrt{rs-ir})
=2\frac{ir-s\hj}{\sqrt{rs-s\hj} + \sqrt{rs-ir}}$. 
Thus it suffices to prove $\hj-i\le\frac{ir-s\hj}{\sqrt{rs}}$, which simplifies
to $\hj\le\FR{ ir+i\sqrt{rs}}{ s+\sqrt{rs}}$.  Since $J={i\sqrt{r/s}}$, it
suffices to prove $\FR{i\sqrt{rs}}{ s} \le \FR{ ir+i\sqrt{rs}}{ s+\sqrt{rs}}$.
Cross-multiplying shows that the two sides are equal.
\end{proof}

When $r=s$, Theorem~\ref{biplower} yields $\spo(K_{r,r}) \le 4r$.  Equality
never holds in this bound (it equals the upper bound $|V(G)|\rho(G)$ from
Theorem~\ref{sharpbds}, which only holds with equality when $G$ has no edges)
but we believe it is asymptotically sharp.  Since every induced
subgraph of a complete bipartite graph is a complete bipartite graph, storing
the optimal values for smaller graphs makes it relatively easy to explore all
options for the moves in the first round to compute $\spo(K_{r,s})$.  The
resulting data for $\spo(K_{r,r})$ with $r\le 1500$ is very closely explained
(always with error at most $2$) by $4r-\sqrt r-\log_3 r$.  We conjecture that
$4r-o(r)$ is correct.

Now consider the lower bound.  When $r=s$, a lower bound of $3r$ follows from
Lister playing the game separately on each edge of a matching.  Since
$\spo(K_{3,3})=10$ (again computed from smaller values by exploring all options
in the first round), the coefficient can be improved to $10/3$.  Indeed, since
data suggests that the ratio tends to $4$, successively larger ratios from
bigger examples yield better asymptotic lower bounds.  The computational data
thus allows us to give lower bounds on $\spo(K_{r,r})/r$ that seem to approach
$4$.  However, such bounds would at present only be proved by a long chain of
case analysis.  Instead, we give a relatively short proof a general lower
bound for all $K_{r,s}$ that reduces in the case $r=s$ to $(7r-3)/2$.

Recall that $\spo(K_{1,t})=t+1+u_{t}$, where $u_t=\floor{(-1+\sqrt{8t+1})/2}$
(Theorem~\ref{thm:split}).  Our strategy for Lister when $r>s+1$ is based on
the results for stars.

\begin{theorem}
Let $u_t=\FL{\FR{-1+\sqrt{8t+1}}2}$.
If $r \geq s \ge1$, then $\spo(K_{r,s}) \geq r+\FR{5s-3}2+u_{r-s}$.
\end{theorem}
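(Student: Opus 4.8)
The plan is to describe an explicit strategy for Lister on $K_{r,s}$ that decomposes into a ``matching phase'' followed by a ``star phase,'' and to use Observation~\ref{disjoint} together with the exact formula $\spo(K_{1,t})=t+1+u_t$ from Theorem~\ref{thm:split}. First I would partition the part $X$ of size $r$ into $s$ groups: take $s-1$ of the vertices of $X$ and pair them off with $s-1$ of the vertices of $Y$ (forming $s-1$ disjoint copies of $K_2$), and let the one remaining vertex of $Y$ together with the remaining $r-s+1$ vertices of $X$ form a copy of $K_{1,r-s}$ (a star with center in $Y$). Wait---to make the star have $r-s$ leaves I should instead pair off $s-1$ vertices of $Y$ with $s-1$ vertices of $X$ and keep $r-(s-1)=r-s+1$ vertices of $X$ as the leaves of a star centered at the last vertex of $Y$; that star is $K_{1,r-s+1}$. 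These $s-1$ edges and one star are pairwise disjoint induced subgraphs of $K_{r,s}$, so Observation~\ref{disjoint} gives
\[
\spo(K_{r,s})\ge (s-1)\spo(K_2)+\spo(K_{1,r-s+1})=3(s-1)+\bigl((r-s+1)+1+u_{r-s+1}\bigr).
\]
This simplifies to $r+2s-1+u_{r-s+1}$, which is close but not quite the claimed $r+\tfrac{5s-3}{2}+u_{r-s}$; the $u$-index and the linear term in $s$ are off, so a cruder decomposition alone will not suffice and Lister must be more clever.

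The fix is to have Lister play a genuinely adaptive strategy rather than merely decomposing the board. I would instead proceed by induction on $r+s$ (with the base cases $s=0$, $s=1$ handled by Theorem~\ref{thm:split} and the trivial cases), and for $r>s$ let Lister's first move be $(j,i)=(1,s)$: mark one vertex of $X$ and all $s$ vertices of $Y$. By Remark~\ref{remark}, Painter either colors the one marked $X$-vertex, leaving $K_{r-1,s}$, or colors all of $Y$, leaving $K_{r-1,0}=\overline{K}_{r-1}$. In the first case the residual cost is by induction at least $r-1+\tfrac{5s-3}{2}+u_{r-1-s}$, and the total is at least $1+s+$ that. In the second case Painter has wiped out $Y$ at a cost of paying $s$; the residual $\overline{K}_{r-1}$ contributes only $r-1$, but Lister scored $1+s$ on the first round. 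Comparing both branches against $r+\tfrac{5s-3}{2}+u_{r-s}$ and using the triangular-number identities for $u$ (Lemma~\ref{lem:trinum}) should pin down the bound; the key inequality to verify is that $u_{r-1-s}\ge u_{r-s}-1$, which always holds. The case $r=s$ needs a slightly different opening move---probably marking $(1,s)$ still works but the residual is $K_{r-1,s}$ with $r-1<s$, so one invokes symmetry and the $r\ge s$ form of the hypothesis.

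I expect the main obstacle to be getting the linear-in-$s$ constant exactly right: the naive decomposition loses about $s/2$ relative to the target, so the adaptive strategy must recover that slack, which means the threshold move $(1,s)$ is likely too timid and Lister should instead mark more vertices of $X$ on early rounds---something like repeatedly playing $(j,s)$ for a well-chosen $j$ until $Y$ has been forced down, interleaving matching-style pressure on $X$. The delicate part is that each time Painter colors in $X$ instead of $Y$, Lister gains ground on the $u_{r-s}$ term but must ``spend'' vertices of $Y$; balancing these two resources so the accumulated score hits $r+\tfrac{5s-3}{2}+u_{r-s}$ rather than the weaker $r+2s-1+u_{r-s+1}$ is where the $\tfrac{5}{2}$ coefficient (as opposed to $2$) comes from, and verifying it will require carefully tracking the induction through the two Painter responses at each round and applying Lemma~\ref{lem:trinum} to handle the floor in $u$.
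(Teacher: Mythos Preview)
Your proposal is not yet a proof: it records two attempts that you yourself recognize as insufficient, and the closing paragraph describes what a successful argument would have to accomplish rather than supplying one. The concrete gap is the Lister strategy. Your move $(1,s)$---one vertex of $X$, all of $Y$---fails outright: if Painter colors all of $Y$, the remaining graph is $\overline{K}_r$ (not $\overline{K}_{r-1}$), and the total cost is $(1+s)+r=r+s+1$, which for $s\ge 2$ is strictly less than the target $r+\tfrac{5s-3}{2}+u_{r-s}$. So this opening cannot force the bound, and no downstream induction repairs it. Your instinct that Lister should ``mark more vertices of $X$ on early rounds'' also points the wrong way: enlarging the marked set in $X$ only improves Painter's ``color in $X$'' option.

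The missing idea is to mark \emph{fewer} vertices of $Y$, not more of $X$. In the paper's proof, for $r\ge s+2$ Lister marks exactly one vertex of the small side and $u_{r-s}$ vertices of the large side. Painter's two options are then to delete $u_{r-s}$ from $X$, leaving $K_{r-u_{r-s},s}$ (where Lemma~\ref{lem:trinum} controls the drop in the $u$-term), or to delete a \emph{single} vertex from $Y$, leaving $K_{r,s-1}$ (so the $\tfrac52$-coefficient drops by $\tfrac52$, compensated by the first-round score $1+u_{r-s}$ together with $u_{r-s+1}\ge u_3=2$). Keeping the $Y$-side of the mark to one vertex is exactly what prevents Painter from wiping out $Y$ cheaply, and it is what produces the $\tfrac52$ coefficient you were unable to reach by decomposition. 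The cases $r=s+1$ and $r=s$ need different opening moves (two from each side, and one from each side, respectively), and the small cases $(2,2)$ and $(3,2)$ are checked by hand. None of this structure appears in your proposal.
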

\begin{proof}
Let $f(r,s)=r+\FR{5s-3}2+u_{r-s}$.  Given $\spo(K_{r,1})$ from 
Theorem~\ref{thm:split}, the value $f(r,1)$ is a lower bound when $s=1$ because
$u_r\ge u_{r-1}$.  Note that $u_0=0$.  By a short case analysis considering
all possible moves, $\spo(K_{2,2})=6>2+\FR{5\cdot2-3}2=f(2,2)$ and
$\spo(K_{3,2})=8>3+\FR{5\cdot2-3}2=f(3,2)$.  Hence in a proof by induction on
$r+s$ we may assume $r\ge s\ge 2$ with $(r,s)\ne(3,2)$.


\smallskip
{\it Lister strategy:}\\
When $r \geq s+2$, Lister marks one vertex in the small part and $u_{r-s}$ in
the large part.\\
When $r = s+1$, Lister marks two vertices from each part.\\
When $r = s$, Lister marks one vertex from each part.
\smallskip

In each case, we consider both possible responses by Painter.  We show that
Lister achieves cost at least $f(r,s)$ for each response.

\caze{1}{$r \geq s+2$.}
By the induction hypothesis,
$$
\spo(K_{r,s}) \geq (1+u_{r-s}) + \min\{f(r-u_{r-s}, s), f(r,s-1)\}.    
$$

Note that $r-u_{r-s}>s$ when $r-s\ge2$, since $u_t<t$ when $t\ge2$.
Thus $f(r-u_{r-s},s)=r-u_{r-s}+\FR{5s-3}2+u_{r-s-u_{r-s}}$.
By Lemma~\ref{lem:trinum}, $u_t\le 1+u_{t-u_t}$.  Thus
$$
1+u_{r-s}+f(r-u_{r-s},s)=r+\FR{5s-3}2+1+u_{r-s-u_{r-s}}\ge f(r,s).
$$
Also,
\begin{align*}
1+u_{r-s}+f(r,s-1)&=u_{r-s}+r+\FR{5(s-1)-3}2+1+u_{r-(s-1)}\\
&\ge r+\FR{5s-3}2+u_{r-s}-\FR32+u_3>f(r,s).
\end{align*}

\caze{2}{$r = s+1\ge4$.}
Here $K_{r-2,s}=K_{r-1,s-1}$, so the induction hypothesis yields
$$
\spo(K_{r,s}) \geq 4 + \min(f(r-1,s-1), f(r,s-2)).
$$
We compute
$$4+f(r-1,s-1)=r-1+\FR{5(s-1)-3}2+4+u_1=r+\FR{5s-3}2+\FR12+u_{r-s}>f(r,s).$$
Since $s\ge3$, we have $s-2\ge1$, and hence
$$4+f(r,s-2)=r+\FR{5(s-2)-3}2+4+u_3=r+\FR{5s-3}2-1+1+u_1=f(r,s).$$

\caze{3}{$r=s$.}
Painter's response always leaves $K_{r,r-1}$.  By the induction hypothesis,
$$
\spo(K_{r,s})\ge 2+f(r,r-1)= r+\FR{5(r-1)-3}2+2+u_1=r+\FR{5r-2}2>f(r,r).
$$

\vspace{-2pc}
\end{proof}

\providecommand{\bysame}{\leavevmode\hbox to3em{\hrulefill}\thinspace}
\providecommand{\MR}{\relax\ifhmode\unskip\space\fi MR }
\providecommand{\MRhref}[2]{%
  \href{http://www.ams.org/mathscinet-getitem?mr=#1}{#2}
}
\providecommand{\href}[2]{#2}

\end{document}